\newcommand{\R}{\mathbb{R}}
\newcommand{\N}{\mathbb{N}}
\def\1{\raisebox{2pt}{\rm{$\chi$}}}
\newcommand{\ud}{\, d}
\newcommand{\abs}[1]{\left| #1 \right|}
\newcommand{\I}{\textrm{I}}
\newcommand{\II}{\textrm{II}}
\newcommand{\dist}{\operatorname{dist}}
\newcommand{\eps}{\varepsilon}
\theoremstyle{plain}
\newtheorem{definition}{Definition}[section]
\newtheorem{proposition}[definition]{Proposition}
\newtheorem{theorem}[definition]{Theorem}
\newtheorem{lemma}[definition]{Lemma}
\newtheorem{remark}[definition]{Remark}
\theoremstyle{definition}
\newtheorem{ejem}{Example}
\theoremstyle{remark}
\numberwithin{equation}{section}
\begin{document}

\title[The evolution problem associated with
eigenvalues of the Hessian]
{\bf The evolution problem associated with
eigenvalues of the Hessian}

\author[P. Blanc, C. Esteve and J. D. Rossi]{Pablo Blanc, Carlos Esteve and Julio D. Rossi}

\address{P. Blanc and J. D. Rossi
\hfill\break\indent
Depto. Matem\'{a}tica, FCEyN, Buenos Aires University,
\hfill\break\indent
Ciudad
Universitaria, Pab~1~(1428),
\hfill\break\indent
Buenos Aires, Argentina.
\hfill\break\indent
 {\tt pblanc@dm.uba.ar, jrossi@dm.uba.ar}}

\address{C. Esteve
\hfill\break\indent Universit\'e Paris 13, Sorbonne Paris Cit\'e, 
\hfill\break\indent
Laboratoire Analyse, Geometrie et Applications, 
\hfill\break\indent 93430, Villetaneuse, France.
\hfill\break\indent
{\tt esteve@math.univ-paris13.fr}}

\date{\today}
\keywords{Eigenvalues of the Hessian, Concave/convex envelopes, 
Dirichlet boundary conditions, Tug-of-War games.}
\subjclass[2010]{35D40,  35K55, 91A80}


\begin{abstract}
In this paper we study the evolution problem
\[
\left\lbrace\begin{array}{ll}
u_t (x,t)- \lambda_j(D^2 u(x,t)) = 0, & \text{in } \Omega\times (0,+\infty), \\
u(x,t) = g(x,t), & \text{on } \partial \Omega \times (0,+\infty), \\
u(x,0) = u_0(x), & \text{in } \Omega,
\end{array}\right.
\]
where $\Omega$ is a bounded domain in $\mathbb{R}^N$ (that verifies a suitable geometric condition on its boundary) and $\lambda_j(D^2 u)$ stands
for the $j-$st eigenvalue of the Hessian matrix $D^2u$. We assume that $u_0 $ and $g$
are continuous functions with the compatibility condition $u_0(x) = g(x,0)$, $x\in \partial \Omega$.

We show that the (unique) solution to this problem exists in the viscosity sense and can be approximated by 
the value function of a two-player zero-sum game as 
the parameter measuring the size of the step that we move in each round of the game goes to zero.  

In addition, when the boundary datum is independent of time, $g(x,t) =g(x)$, 
we show that viscosity solutions to this evolution problem stabilize and converge exponentially fast to the unique stationary solution 
as $t\to \infty$. For $j=1$ the limit profile is just the convex envelope inside $\Omega$ of the boundary datum $g$,
while for $j=N$ it is the concave envelope. We obtain this result with two different techniques: with PDE tools and 
and with game theoretical arguments. Moreover, in some special cases (for affine boundary data) 
we can show that solutions coincide with the stationary solution in finite time (that depends only on $\Omega$ and not on 
the initial condition $u_0$). 
\end{abstract}

\maketitle

\section{Introduction}\label{sec:intro}
\setcounter{equation}{0}

Consider the problem
\begin{equation}\label{convex envelope evolution}
\left\lbrace\begin{array}{ll}
u_t (x,t) - \lambda_j(D^2 u(x,t)) = 0, & \text{in } \Omega\times (0,+\infty), \\
u(x,t) = g(x,t), & \text{on } \partial \Omega \times (0,+\infty), \\
u(x,0) = u_0(x), & \text{in } \Omega.
\end{array}\right.
\end{equation}
here $\Omega$ is a bounded domain in $\mathbb{R}^N$, with $N\geq 1$ and 
$\lambda_j(D^2 u)$ stands for the $j-$th eigenvalue of $D^2u = (\partial^2_{x_i,x_j} u)_{ij}$, which is the hessian matrix of $u$.
We will assume from now on that $u_0$ and $g$
are continuous functions with the compatibility condition $u_0(x) = g(x,0)$, $x\in \partial \Omega$.

Problem \eqref{convex envelope evolution} is the evolution version of the elliptic problem
\begin{equation}\label{convex envelope equation}
\left\lbrace\begin{array}{ll}
\lambda_j(D^2 z(x)) = 0, & \text{in } \Omega, \\
z(x) = g(x), & \text{on } \partial \Omega,
\end{array}\right.
\end{equation}
which was extensively studied in \cite{Alex,Birin,Birin2,BirindelliIshii,BlancRossi,caffa,HL1,HL2,OS,Ober}. In particular, for $j=1$ and $j=N$, problem \eqref{convex envelope equation} is the equation for the convex and concave envelope of $g$ in $\Omega$, respectively, i.e., 
the solution $z$ is the biggest convex (smallest concave) function $u$, satisfying $u\leq g$ ($u\geq g$) on $\partial \Omega$, see
\cite{OS,Ober}.

In \cite{BlancRossi}, existence and uniqueness of a continuous solution for \eqref{convex envelope equation} is proved under a hypothesis on the geometry of the domain. Moreover, from the results 
in \cite{HL1} a comparison principle holds for viscosity sub and supersolutions of \eqref{convex envelope equation}. 
Using this comparison principle, together with the connection with concave/convex envelopes of the boundary datum
$g$ for solutions to  \eqref{convex envelope equation}, the geometric condition introduced in \cite{BlancRossi}
turns out to be necessary and sufficient for the well posedness of this problem in the viscosity sense.
In our parabolic setting, using classical ideas from \cite{CIL} one can show that there is also a
comparison principle. Hence, uniqueness of a viscosity solution follows. 
Existence of solutions to \eqref{convex envelope equation} was shown in \cite{HL1}
using Perron's method.
A different existence proof was given in \cite{BlancRossi} where the authors introduce a 
 two-player zero-sum game whose value function approximates the solution of the PDE as the size of the game step goes to zero.

For our parabolic problem, in order to show existence of a continuous viscosity solution it seems natural to try to use
Perron's method relying on the comparison principle. However, we prefer to 
take a different approach. We provide an existence proof using an approximation based on game theory (this approach will be very useful
since it allows us to gain some intuition that will be used when dealing with the asymptotic behaviour of the solutions). For references concerning
games (Tug-of-War games) and fully nonlinear PDEs we refer to \cite{BPR,code,heino,JPR,luiro,MPR,MPRa,MPRb,QS,Par,Peres,PSSW} and to \cite{Delpe,MPRparab} for parabolic versions. 
Here we propose a parabolic version of the game introduced in \cite{BlancRossi} in order to show existence of a viscosity solution to \eqref{convex envelope evolution}. Like for the elliptic problem, it is a two-player zero-sum game. The initial position of the game is determined by a token placed at some point $x_0\in \Omega$ and at some time $t_0>0$. Player I, who wants to minimize the final payoff, chooses a subspace $S$ of dimension $j$ in $\mathbb{R}^N$ and then, Player II, who wants to maximize the final payoff, chooses a unitary vector $v\in S$.  
Then, for a fixed $\eps>0$, the position of the token
is moved to $(x_0+ \eps v,t_0-\eps^2/2)$ or to $(x_0 - \eps v, t_0-\eps^2/2)$ with equal probabilities. 
After the first round, the game continues from the new position $(x_1,t_1)$ according to the same rules. Notice that we take
$t_1 = t_0-\eps^2/2$, but $x_1 = x_0 \pm \eps v$ depends on a coin toss.
The game ends when the token leaves $\Omega\times (0,T]$.

A function $h$ is defined outside the domain.
For our purposes we choose $h$ to be such that $h(x,t)=g(x,t)$ for $x\in\partial\Omega$ and $t>0$, and $h(x,0)=u_0(x)$ for $x\in\Omega$.
That is, $h$ is a continuous extension of the boundary data.
We denote by $(x_\tau,t_\tau )$ the point where the token leaves the domain, that is, either $x_\tau \not\in \Omega$ with $t_\tau >0$, or
$t_\tau\leq 0$.
At this point the game ends and the final payoff is given by $h(x_\tau,t_\tau)$.
That is, Player I pays Player 2 the amount given by $h(x_\tau,t_\tau)$.

For Player I, we denote by $S_I$ a strategy, which is a collection of measurable mappings $S_I = \{ S_k\}_{k=0}^\infty$, where each mapping has the form 
$$
\begin{array}{cccc}
S_k: & \Omega^{k+1} \times \big(k\eps^2/2, +\infty \big) & \longrightarrow & Gr(j, \mathbb{R}^N) \\
     & (x_0,\ldots,x_{k},t_0)  & \longmapsto & S,
\end{array}
$$   
where $S$ is a subspace of dimension $j$.
For Player II, a strategy $S_{II}$ is a collection of measurable mappings $S_{II} = \{ S_k\}_{k=0}^\infty$, where each mapping has the form
$$
\begin{array}{cccc}
S_k: & \Omega^{k+1} \times Gr(j, \mathbb{R}^N) \times \big(k\eps^2/2, +\infty \big) & \longrightarrow & S \\
     & (x_0,\ldots,x_{k},S,t_0)  & \longmapsto & v,
\end{array}
$$
where $v$ is a unitary vector in $S$.

Once both players chose their strategies, we can compute the expected value for the final payoff, which we denote by
$$
\mathbb{E}_{S_I,S_{II}}^{x_0,t_0}[h(x_\tau,t_\tau)].
$$
The value of the game for each player is the best expected value of the final payoff using one of their respective strategies. Since Player I wants to minimize the final payoff and Player II wants to maximize it, we can write the value of the game for each player as follows.
$$
u_I^\eps (x_0,t_0) = \displaystyle\inf_{S_I} \displaystyle\sup_{S_{II}} \mathbb{E}_{S_I,S_{II}}^{x_0,t_0}[h(x_\tau,t_\tau)], \qquad
u_{II}^\eps (x_0,t_0) = \displaystyle\sup_{S_{II}} \displaystyle\inf_{S_I} \mathbb{E}_{S_I,S_{II}}^{x_0,t_0}[h(x_\tau,t_\tau)].
$$
Observe that the expectations above are well defined since the number steps of the game is at most $\lceil 2 t_0/\eps^2\rceil$,
and therefore, the game ends in a finite number of steps with probability 1.
For this game it holds that $u_{I}^\eps (x_0,t_0) = u_{II}^\eps (x_0,t_0)$. Then, we define the value of the game as
$$
u^\eps (x_0,t_0) = u_{I}^\eps (x_0,t_0) = u_{II}^\eps (x_0,t_0).
$$

In section \ref{sect-games} we prove that the game has a value $u^\eps(x,t)$ that verifies an equation
(called the Dynamic Programming Principle in the literature) and that $u^\eps(x,t)$ converges uniformly in $\Omega\times[0,T]$ for every $T>0$ to a function $u(x,t)$, which is continuous and is the unique viscosity solution of the problem \eqref{convex envelope evolution}. This is the content of our first result, see Theorem \ref{teo.convergencia.juego}
below. For the convergence of $u^\eps(x,t)$ we need to assume a condition on the domain that we impose from now on and reads as follows: 
For every $y\in\partial\Omega$, we assume that there exists
$r>0$ such that for every $\delta>0$ there exists $T \subset\R^N$ a subspace of dimension $j$, $w\in\R^N$ of norm 1,
$\lambda>0$ and $\theta>0$ such that
\begin{equation} \label{cond.geoj.intro}
\tag{$F_j$}
\{x\in\Omega\cap B_r(y)\cap T_\lambda: \langle w,x-y\rangle<\theta\}\subset B_\delta(y)
\end{equation}
where 
\[
T_\lambda=\{x\in\R^N: d(x-y,T)<\lambda\}.
\]
As in \cite{BlancRossi}, for our game with a given $j$ we will assume that $\Omega$ satisfies both ($F_j$) and ($F_{N-j+1}$).
Notice that a uniformly convex domain verifies this condition for every $j\in \{1,..,N\}$, but more general domains also satisfy this hypothesis, see \cite{BlancRossi}.

\begin{theorem} \label{teo.convergencia.juego} There is a value function for the game described
before, $u^\eps$. This value function can be characterized as being the unique
solution to the 
Dinamic Programing Principle (DPP)
\[
\left\{
\begin{array}{ll}
\displaystyle u^\eps (x,t) = \inf_{{dim}(S)=j} \sup_{v\in S, |v|=1}
\left\{ \frac{1}{2} u^\eps (x + \eps v,t-\frac{\eps^2}{2}) + \frac{1}{2} u^\eps (x - \eps v,t-\frac{\eps^2}{2})
\right\}  & x \in \Omega, \, t>0, \\[10pt]
u^\eps (x,t) = h(x,t)  & x \not\in \Omega, \text{ or }  t \leq 0.
\end{array}
\right.
\]
Moreover, if $\Omega$ satisfies conditions ($F_j$) and ($F_{N-j+1}$), there exists a function $u \in C(\overline{\Omega} \times [0,+\infty)$ such that
\[
\begin{split}
u^{\eps}\to u \qquad\textrm{ uniformly in}\quad\overline{\Omega} \times [0,T],
\end{split}
\]
as $\eps\to 0$ for every $T>0$.
This limit $u$ is the unique viscosity solution to 
$$
\left\lbrace\begin{array}{ll}
u_t (x,t) - \lambda_j(D^2 u (x,t)) = 0, & \text{in } \Omega\times (0,+\infty), \\
u(x,t) = g(x,t), & \text{on } \partial \Omega \times (0,+\infty), \\
u(x,0) = u_0(0), & \text{in } \Omega.
\end{array}\right.
$$
\end{theorem}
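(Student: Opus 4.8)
The plan is to follow the standard Tug-of-War scheme: first solve the Dynamic Programming Principle and identify its solution with the value of the game, then establish a uniform bound together with an asymptotic equicontinuity estimate for the family $\{u^\eps\}$ (this is the point where the conditions $(F_j)$ and $(F_{N-j+1})$ are used), and finally let $\eps\to0$ and check, via a Taylor expansion and the min--max characterization of eigenvalues, that any uniform limit is a viscosity solution; uniqueness then follows from the comparison principle for \eqref{convex envelope evolution}. For the first point, note that from an initial time $t_0$ the token leaves $\Omega\times(0,T]$ after at most $\lceil 2t_0/\eps^2\rceil$ moves, so the DPP can be solved explicitly by induction on the time slabs $(k\eps^2/2,(k+1)\eps^2/2]$: one sets $u^\eps=h$ on $\{t\le0\}\cup(\R^N\setminus\Omega)$ and computes each slab from the previous one (and from $h$ for the points outside $\Omega$) through the min--max formula. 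This produces a unique bounded solution of the DPP with $\inf h\le u^\eps\le\sup h$. The identification $u^\eps=u_I^\eps=u_{II}^\eps$ is then standard: if Player~I fixes a strategy choosing, at the $k$-th round, a $j$-dimensional subspace that is $\eta2^{-(k+1)}$-optimal for the infimum in the DPP, then $u^\eps(x_k,t_k)+\eta2^{-k}$ is a supermartingale for every strategy of Player~II, so optional stopping (legitimate since $\tau$ is bounded) gives $u_I^\eps\le u^\eps+\eta$; the symmetric choice for Player~II gives $u_{II}^\eps\ge u^\eps-\eta$, and since always $u_{II}^\eps\le u_I^\eps$ we conclude.

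Boundedness being immediate, the heart of the compactness argument is an oscillation estimate for $u^\eps$ which is uniform in $\eps$. Near the boundary we turn the geometric hypotheses into discrete barriers: given $y\in\partial\Omega$ and $\delta>0$, condition $(F_j)$ supplies a $j$-dimensional subspace $T$ such that, by always choosing $T$, Player~I keeps the token inside the thin slab $T_\lambda$ and forces it to leave $\Omega\cap B_r(y)$ through $B_\delta(y)$ within a controlled number of rounds; comparing $u^\eps$ with a suitable function of paraboloid type then bounds it from above near $y$ by $g(y,\cdot)$ plus a modulus of continuity that does not depend on $\eps$, and condition $(F_{N-j+1})$ gives the matching lower bound via Player~II. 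The deterministic decay of the time coordinate, together with a concentration (Azuma--Hoeffding) estimate for the bounded-increment martingale $x_k$ and the modulus of continuity of $u_0$, yields the analogous control as $t\to0^+$. Finally, coupling two copies of the game started at nearby space--time points and moving both of them with the same unitary vector and the same coin toss freezes the spatial gap while the time coordinates run in parallel down to the exit region; combined with the boundary and initial estimates this propagates the modulus of continuity to all of $\overline\Omega\times[0,T]$. An Arzel\`a--Ascoli-type argument for these discretely oscillating families then produces a subsequence along which $u^\eps\to u$ uniformly on $\overline\Omega\times[0,T]$, with $u$ continuous, $u=g$ on $\partial\Omega\times(0,\infty)$ and $u(\cdot,0)=u_0$.

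To see that $u$ solves the equation in the viscosity sense, let $\phi$ touch $u$ from below at an interior point $(x_0,t_0)$. Evaluating the DPP at a point $(x_\eps,t_\eps)\to(x_0,t_0)$ where $u^\eps-\phi$ is almost minimal and expanding $\phi$ by Taylor,
\[
\tfrac12\Big(\phi\big(x_\eps+\eps v,t_\eps-\tfrac{\eps^2}{2}\big)+\phi\big(x_\eps-\eps v,t_\eps-\tfrac{\eps^2}{2}\big)\Big)=\phi(x_\eps,t_\eps)-\tfrac{\eps^2}{2}\,\phi_t+\tfrac{\eps^2}{2}\,\langle D^2\phi\,v,v\rangle+o(\eps^2),
\]
so, using the Courant--Fischer identity $\inf_{\dim S=j}\sup_{v\in S,\,|v|=1}\langle D^2\phi\,v,v\rangle=\lambda_j(D^2\phi)$, dividing by $\eps^2/2$ and letting $\eps\to0$ give $\phi_t(x_0,t_0)-\lambda_j(D^2\phi(x_0,t_0))\ge0$; the subsolution inequality is obtained symmetrically. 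Since \eqref{convex envelope evolution} admits a comparison principle (adapting the arguments of \cite{CIL}), the viscosity solution is unique, hence the limit does not depend on the chosen subsequence and the whole family $u^\eps$ converges.

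The main obstacle is the boundary analysis: extracting from the purely geometric conditions $(F_j)$ and $(F_{N-j+1})$ quantitative discrete barriers that force a quick, localized exit of the token, and securing a modulus of continuity at $t=0$ that is uniform in $\eps$ in spite of the time variable being updated deterministically. The remaining ingredients are routine within the Tug-of-War framework.
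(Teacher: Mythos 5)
Your proposal follows essentially the same route as the paper: backward-in-time construction of the DPP solution on the slabs $(k\eps^2/2,(k+1)\eps^2/2]$, identification with the game value via $\eta2^{-(k+1)}$-almost-optimal strategies and optional stopping, equicontinuity through the uniform bound, the coupling of two games with the same coin tosses, and martingale estimates at the parabolic boundary (with $(F_j)$ letting Player~I pick $S=T$ and $(F_{N-j+1})$ letting Player~II pick $v\in S\cap T$, exactly as in the paper), followed by the Taylor expansion and the Courant--Fischer identity to pass to the viscosity limit and the comparison principle for uniqueness. The only deviations are cosmetic choices of probabilistic tool (Azuma--Hoeffding and a discrete barrier phrasing where the paper uses the second-moment martingale $|x_k-x|^2-\eps^2 k$ and the linear martingale $\langle x_n-y,w\rangle$ with Chebyshev-type bounds), so the argument is correct and matches the paper's proof.
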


Once we proved existence and uniqueness of solutions, we focus on their asymptotic behaviour as $t\to \infty$.
We restrict our attention to the case where the boundary datum does not depend on $t$, that is,
\begin{equation} \label{eq.ppp}
\left\lbrace\begin{array}{ll}
u_t (x,t) - \lambda_j(D^2 u(x,t)) = 0, & \text{in } \Omega\times (0,+\infty), \\
u(x,t) = g(x), & \text{on } \partial \Omega \times (0,+\infty), \\
u(x,0) = u_0(x), & \text{in } \Omega, 
\end{array}\right.
\end{equation}
where $u_0$ is a continuous function defined on $\overline{\Omega}$ and $g = u_0 |_{\partial\Omega}$.

Using PDE techniques, a comparison argument with super and sub solutions constructed using an associated eigenvalue problem,
we can show that $u(x,t)$ converges exponentially fast to the stationary solution. 
In the special case of $j=1$ (or $j=N$) this result provides us with 
an approximation of the convex envelope (or the concave envelope) of a boundary datum by solutions
to a parabolic problem.

\begin{theorem} \label{teo.comp.asymp.intro}
Let $\Omega\subset \mathbb{R}^N$ be an open bounded domain,
and let $u_0$ be a continuous function defined on $\overline{\Omega}$ and $g = u_0 |_{\partial\Omega}$.
Then, there exist two positive constants, $\mu >0$ (that depends on $\Omega$) and $C$ (depending on the initial condition $u_0$) such that
the unique viscosity solution $u$ of \eqref{eq.ppp} verifies
\begin{equation} \label{cotas.asimp.intro}
\| u(\cdot,t) - z(\cdot) \|_{\infty} \leq C e^{-\mu t} ,
\end{equation}
where $z$ is the unique viscosity solution of \eqref{convex envelope equation}.
\end{theorem}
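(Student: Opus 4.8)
The plan is to derive \eqref{cotas.asimp.intro} from a barrier argument combined with the comparison principle for the parabolic problem (which, as recalled in the Introduction, follows from the classical theory in \cite{CIL}). Concretely, I would sandwich the solution $u$ of \eqref{eq.ppp} between two time-dependent barriers built around the stationary solution $z$ of \eqref{convex envelope equation},
\[
w^{\pm}(x,t) = z(x) \pm C e^{-\mu t}\,\psi(x),
\]
where $\psi$ is a fixed smooth function, strictly concave and positive on $\overline{\Omega}$, and $\mu,C>0$ are to be chosen. Here I use that $u\in C(\overline{\Omega}\times[0,\infty))$ (by Theorem \ref{teo.convergencia.juego}, with $g$ independent of $t$) and that $z$ exists, is unique and continuous on $\overline{\Omega}$ with $z=g=u_0|_{\partial\Omega}$ on $\partial\Omega$ (by \cite{BlancRossi}, under the standing geometric hypotheses on $\Omega$).

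For $\psi$ I would take the explicit quadratic $\psi(x)=K-|x|^2$ with $K$ so large that $\psi\ge 1$ on $\overline{\Omega}$, and set $M:=\max_{\overline{\Omega}}\psi$. Since $D^2\psi=-2I$, one has the exact shift identity $\lambda_j(A+2\rho I)=\lambda_j(A)+2\rho$. Using it in the viscosity computation: if a smooth $\phi$ touches $w^{+}$ from below at an interior point $(x_0,t_0)\in\Omega\times(0,\infty)$, then $\phi_t(x_0,t_0)=-\mu Ce^{-\mu t_0}\psi(x_0)$, while $x\mapsto \phi(x,t_0)-Ce^{-\mu t_0}\psi(x)$ touches $z$ from below at $x_0$, so that $\lambda_j(D^2_x\phi(x_0,t_0))\le -2Ce^{-\mu t_0}$ because $z$ solves \eqref{convex envelope equation}; hence
\[
\phi_t(x_0,t_0)-\lambda_j(D^2_x\phi(x_0,t_0)) \ \ge\ Ce^{-\mu t_0}\bigl(2-\mu\,\psi(x_0)\bigr).
\]
Choosing $\mu:=2/M$, which depends only on $\Omega$, makes the right-hand side nonnegative, so $w^{+}$ is a viscosity supersolution of $u_t-\lambda_j(D^2u)=0$ in $\Omega\times(0,\infty)$; the mirror computation (test functions touching from above) shows $w^{-}$ is a viscosity subsolution. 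Conceptually this is the ``associated eigenvalue problem'' step: $\psi$ is a positive supersolution of $-\lambda_N(D^2\psi)\ge\mu\psi$ in $\Omega$, and for a general strictly concave $\psi$ one would replace the exact shift by Weyl's inequality $\lambda_j(A+B)\le\lambda_j(A)+\lambda_N(B)$; using a genuine principal eigenfunction would sharpen $\mu$ but is not needed here.

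To fix $C$: on $\partial\Omega\times(0,\infty)$ one has $w^{+}(x,t)=g(x)+Ce^{-\mu t}\psi(x)\ge g(x)=u(x,t)$ since $\psi>0$, and symmetrically $w^{-}\le u$; at $t=0$, since $\psi\ge 1$ on $\overline{\Omega}$, the choice $C:=\|u_0-z\|_{L^\infty(\overline{\Omega})}$ gives $w^{-}(x,0)\le u_0(x)\le w^{+}(x,0)$. Applying the comparison principle on each cylinder $\overline{\Omega}\times[0,T]$ and letting $T\to\infty$ yields $w^{-}\le u\le w^{+}$ on $\overline{\Omega}\times[0,\infty)$, hence $|u(x,t)-z(x)|\le C\psi(x)e^{-\mu t}\le CM\,e^{-\mu t}$, which is \eqref{cotas.asimp.intro} with rate $\mu$ depending only on $\Omega$ and constant $CM$ depending on $u_0$ (through $\|u_0-z\|_\infty$, with $z$ determined by $g=u_0|_{\partial\Omega}$).

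I expect the main obstacle to be a matter of care rather than of idea: first, carrying out the super/subsolution verification rigorously at the viscosity level --- i.e. checking that adding the smooth, time-dependent term $\pm Ce^{-\mu t}\psi$ to the stationary viscosity solution $z$ produces honest viscosity super/subsolutions of the parabolic equation; and second, applying the parabolic comparison principle to merely semicontinuous sub/supersolutions, which is done by working on finite time-strips and quoting the comparison result recalled in the Introduction. A secondary point is that the implicit well-posedness hypotheses for \eqref{eq.ppp} and \eqref{convex envelope equation} --- the geometric conditions ($F_j$), ($F_{N-j+1}$) on $\Omega$ --- are what guarantee the existence and boundary continuity of both $u$ and $z$ that are used throughout.
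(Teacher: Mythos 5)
Your proof is correct, and it follows the same overall strategy as the paper: sandwich $u$ between barriers of the form $z(x)\pm Ce^{-\mu t}\,(\text{positive function})$ and invoke the parabolic comparison principle. The difference lies in the choice of barrier. The paper takes the positive function to be the principal eigenfunction $\psi_R$ of $-\lambda_N(D^2\cdot)$ on a large strictly convex domain $\Omega_R\supset\supset\Omega$ (so that $\psi_R$ and $\varphi_R=-\psi_R$ are bounded away from zero on $\overline\Omega$, which is how the constants $C_1,C_2$ are fixed from $u_0$), and verifies the super/subsolution property via the Weyl-type inequality $\lambda_1(A)+\lambda_j(B)\le\lambda_j(A+B)\le\lambda_N(A)+\lambda_j(B)$ of Lemma \ref{lema.matrices}. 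You instead use the explicit quadratic $\psi(x)=K-|x|^2$ and the exact shift identity $\lambda_j(A+\rho I)=\lambda_j(A)+\rho$ (the case of Lemma \ref{lema.matrices} where both bounds coincide). This buys you two things: the argument is self-contained, avoiding the existence theory for the eigenvalue problem \eqref{autovalor.positivo} from \cite{BirindelliIshii} and the auxiliary enlarged domain; and your verification that $w^\pm$ are viscosity super/subsolutions is carried out honestly at the level of test functions touching $z$, whereas the paper's computation is written as if $z$ and $\varphi_R$ were smooth. What the eigenfunction approach buys in return is a path to a sharper decay rate (the first eigenvalue of $\Omega$ itself, pursued in Theorem \ref{teo.comportamiento.asintotico.mu1}), which your fixed $\mu=2/\max\psi$ does not attempt; but for the statement as given, where any $\mu>0$ depending on $\Omega$ suffices, your quadratic barrier is enough. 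One small point of hygiene: your final constant $CM$ depends on $\Omega$ as well as on $u_0$, but the same is true of the paper's constants, so this does not affect the claim.
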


In addition, we also describe an interesting behavior of the solutions.
Let us present our ideas in the simplest case and consider the special case $j=1$ with $g\equiv 0$, that is we deal with the problem 
$$
\left\lbrace\begin{array}{ll}
u_t (x,t) - \lambda_1(D^2 u (x,t) )=0, & \text{in } \Omega\times (0,+\infty), \\
u (x,t) = 0, & \text{on } \partial\Omega \times (0,+\infty), \\
u (x,0) = u_0, & \text{in } \Omega.
\end{array}\right.
$$
Notice that in this case $z\equiv 0$ and from Theorem \ref{teo.comp.asymp.intro} we have that 
$u(x,t) \to 0$ exponentially fast, $- C e^{-\mu t } \leq u(x,t) \leq C e^{-\mu t}$. In this scenario we can 
improve the upper bound. We show that there exists a finite time 
$T>0$ depending only on $\Omega$, such that the solution satisfies $u(x,t)\leq 0$, for any $t>T$. 
This is a consequence of the fact that the eigenvalue problem
\begin{equation}\label{lambda 1 eigenvalue eq}
-\lambda_1 (D^2\varphi (x)) = \mu \varphi (x), \quad  \text{in } \Omega, 
\end{equation}
admits a positive solution for any $\mu>0$ whenever $\Omega$ is bounded.
In particular, this result says that, for $g\equiv 0$ and $j=1$, there exists $T>0$ such that the solution of \eqref{convex envelope evolution} is below the convex envelope of $g$ in $\Omega$ for any time beyond $T$.
In fact, the same situation occurs when $g$ is an affine function (we just apply the same argument to $\tilde{u} = u - g$).
When we consider $j=N$ and an affine function $g$, we have the analogous behavior, i.e. there exists $T>0$ such that the solution of \eqref{convex envelope evolution} is above the concave envelope of $g$ in $\Omega$ for any time beyond $T$. However, the situation is different when one considers $1<j<N$. 
In this case, equation \eqref{lambda 1 eigenvalue eq} admits a positive and a negative solution for any $\mu>0$,
 and therefore, it is possible to prove the existence of $T>0$, depending only on $\Omega$, such that $u(x,t) = z(x)$, for any $t>T$, where $z$ is the solution of \eqref{convex envelope equation}. We sum up these results in the following theorem.
 
\begin{theorem} \label{teo.comp.cur.intro}
Let $\Omega\subset \mathbb{R}^N$ be an open bounded domain. 
Let $g$ be the restriction of an affine function to $\partial\Omega$ and $u_0$ a continuous function in $\Omega$. If $u(x,t)$ is the viscous solution of \eqref{eq.ppp} and $z(x)$ is the 
affine function (that turns out to be the viscous solution of \eqref{convex envelope equation}), then there exists $T>0$, depending only on $\Omega$, such that
\begin{enumerate}
\item If $j=1$, then $u(x,t)\leq z(x)$, for any $t>T$.
\item If $j=N$, then $u(x,t)\geq z(x)$, for any $t>T$.
\item If $1<j<N$, then $u(x,t)=z(x)$, for any $t>T$.
\end{enumerate}
\end{theorem}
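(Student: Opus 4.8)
The plan is to reduce to vanishing (affine) boundary data and then, for the operators $\lambda_j$ with $j\le N-1$, to exhibit an explicit one–parameter family of global supersolutions, each of which decays exponentially but whose pointwise infimum over the parameter already vanishes after a finite time depending only on $\Omega$. First I would set $\tilde u=u-z$. Since $z$ is affine, $D^2\tilde u=D^2u$, hence $\lambda_j(D^2\tilde u)=\lambda_j(D^2u)$, so $\tilde u$ is the viscosity solution of the same evolution equation with lateral datum $0$ and initial datum $\tilde u_0:=u_0-z\in C(\overline\Omega)$, which vanishes on $\partial\Omega$. Statements (i)--(iii) become, respectively, $\tilde u(\cdot,t)\le 0$, $\tilde u(\cdot,t)\ge 0$, and $\tilde u(\cdot,t)\equiv 0$ for $t>T$.

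The crucial ingredient is an explicit family of eigenfunctions: for $p\in\R^N$ and $\mu>0$ put $\varphi_{\mu,p}(x)=e^{-\mu|x-p|^2/2}$. A direct computation shows that $D^2\varphi_{\mu,p}(x)$ has the eigenvalue $-\mu\varphi_{\mu,p}(x)$ with multiplicity $N-1$ and the eigenvalue $\varphi_{\mu,p}(x)\big(\mu^2|x-p|^2-\mu\big)$ in the direction $x-p$, the latter being $\ge -\mu\varphi_{\mu,p}(x)$ because $\varphi_{\mu,p}>0$. Hence $\lambda_k(D^2\varphi_{\mu,p})=-\mu\varphi_{\mu,p}$ for every $1\le k\le N-1$, i.e. $\varphi_{\mu,p}$ is a positive solution of $-\lambda_k(D^2\varphi)=\mu\varphi$ for every $\mu>0$. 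Consequently, for such $k$ and any constant $c>0$, the smooth function $v(x,t)=c\,e^{-\mu t}\varphi_{\mu,p}(x)$ satisfies $v_t-\lambda_k(D^2v)=0$ classically, so it is in particular a viscosity supersolution of the $\lambda_j$--equation whenever $j\le N-1$.

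Now put $d_{\max}(p)=\max_{\overline\Omega}|x-p|$ and $d_{\min}(p)=\min_{\overline\Omega}|x-p|$, and choose $p$ minimizing $d_{\max}(p)^2-d_{\min}(p)^2$; this quantity is continuous, strictly positive (an open set cannot lie on a sphere) and tends to $+\infty$ as $|p|\to\infty$, so the minimum is attained and $T:=\tfrac12\big(d_{\max}(p)^2-d_{\min}(p)^2\big)>0$ depends only on $\Omega$. Assume $j\le N-1$ and (the nontrivial case) $M_0:=\max_{\overline\Omega}\tilde u_0>0$. With $c_\mu=M_0\,e^{\mu d_{\max}(p)^2/2}$, the function $v_\mu=c_\mu e^{-\mu t}\varphi_{\mu,p}$ satisfies $v_\mu(\cdot,0)\ge M_0\ge\tilde u_0$ on $\overline\Omega$, is positive (hence $\ge\tilde u=0$) on $\partial\Omega\times(0,\infty)$, and solves the $\lambda_j$--equation; by the parabolic comparison principle quoted in the introduction, $v_\mu\ge\tilde u$ on $\overline\Omega\times[0,\infty)$. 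For $x\in\overline\Omega$ one has $v_\mu(x,t)\le M_0\,e^{\mu(T-t)}$, so letting $\mu\to\infty$ with $t>T$ fixed yields $\tilde u(x,t)\le 0$. This gives (i) and the upper bound in (iii). For the reverse inequality I would use that $u\mapsto -u$ sends a solution of the $\lambda_j$--equation to a solution of the $\lambda_{N+1-j}$--equation, because $\lambda_j(D^2u)=-\lambda_{N+1-j}\big(D^2(-u)\big)$; applying the step just done to $-\tilde u$ with the index $N+1-j$, which is $\le N-1$ precisely when $j\ge 2$, gives $-\tilde u(\cdot,t)\le 0$ for $t>T$, i.e. $\tilde u(\cdot,t)\ge 0$. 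This proves (ii) and, combined with the upper bound, (iii). Finally $u=\tilde u+z$ translates everything into the stated comparisons between $u$ and $z$.

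The only genuinely delicate point is the passage $\mu\to\infty$: no individual $v_\mu$ ever reaches $0$ — each decays exponentially but stays strictly positive, which would only reproduce the exponential stabilization of Theorem~\ref{teo.comp.asymp.intro} — yet the envelope $\inf_{\mu>0}v_\mu(x,t)$ is identically $0$ for $t>T$, and this, made possible by having genuine eigenfunctions $\varphi_{\mu,p}$ of $-\lambda_k(D^2\cdot)$ for \emph{every} $\mu>0$ and \emph{every} $k\le N-1$, is exactly what forces the finite extinction time. Everything else — the affine reduction, the eigenvalue computation, the reflection $u\mapsto -u$, and the invocation of the comparison principle — is routine.
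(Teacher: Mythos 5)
Your proof is correct and follows essentially the same route as the paper: reduce to zero boundary data via $\tilde u=u-z$, use the explicit radial supersolutions $c_\mu e^{-\mu t}e^{-\mu|x-p|^2/2}$ (which solve the $\lambda_k$--equation for every $k\le N-1$ because $-\mu e^{-\mu|x-p|^2/2}$ is the eigenvalue of multiplicity $N-1$), compare, and let $\mu\to\infty$ to extract a finite extinction time depending only on $\Omega$. Your treatment is in fact slightly more careful than the paper's on two minor points: the optimized choice of the center $p$ (the paper simply takes a ball $B_R\supset\Omega$ and gets $T$ in terms of $R$), and the explicit use of the duality $\lambda_j(A)=-\lambda_{N+1-j}(-A)$ for cases (ii)--(iii), where the paper just asserts that ``the same argument'' applies.
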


Notice that (iii) says that for $1<j<N$ and an affine boundary datum we have convergence to the stationary solution {\it in finite time}.

Although this result implies that for some situations the exponential decay given in Theorem \ref{teo.comp.asymp.intro} is not sharp, 
we will also describe some other situations (with boundary data that are not affine functions) for which the solution $u(x,t)$ does not 
fall below or above the convex or concave envelope in finite time.

In this final part of the paper we also look at the asymptotic behaviour of the values of the game described above and show that
there exists $\mu >0$, a constant depending only on $\Omega$, and $C$ independent of $\eps$ such that
$$
\| u^\eps (\cdot,t) - z^\eps (\cdot)\|_\infty \leq C e^{-\mu t},
$$
being $u^\eps$ the value function for the game and $z^\eps$ a stationary solution to de (DPP).
Note that from here we can provide a different proof (using games) of Theorem \ref{teo.comp.asymp.intro}.
We also provide a new proof of Theorem \ref{teo.comp.cur.intro} using game theoretical arguments.
With these techniques we can obtain a similar result in the case that $g$ coincide with an affine function in a half-space.

Moreover, thanks to the game approach we can show a more bizarre behaviour in a simple configuration of 
the data. Consider the equation $u_t = \lambda_j (D^2u)$. 
Let $\Omega$ be a ball centered at the origin, $\Omega= B_R\subset \mathbb{R}^N$, and call $(x',x'')\in \mathbb{R}^{j}\times\mathbb{R}^{N-j}$. Assume that the boundary datum is given by two affine functions
(for example, take $g(x',x'') = |x''|$, for $ (x',x'')\in \R^N\setminus \Omega$) and the initial condition is strictly positive inside $\Omega$, 
$ u_0 > 0$.  
For this choice of $g$, we have that the stationary solution satisfies $
z(x',x'') = 0 $ in $\Omega\cap \{x''=0\}$.
In this configuration of the data we have that for every point $x_0$ in the segment $\{x''=0\}\cap \Omega$, we have
$$
u(x_0,t) >0 = z(x_0)
$$
for every $t>0$. However, for any point $x_0$ outside the segment $\{x''=0\}\cap \Omega$, there exists a finite time
$t_0 $ (depending on $x_0$) such that 
$$
u(x_0,t) = z(x_0)
$$
for every $t>t_0$.

That is, the solution to the evolution problem eventually coincides with the stationary solution outside the segment $\{x''=0\}\cap \Omega$, 
but this fact does not happens on the segment.

\medskip

The paper is organized as follows: in section  \ref{sect-games} we collect some preliminaries
and we use the game theoretical approach to obtain existence of solutions; finally, in section
\ref{sect-asymp} we deal with the asymptotic behaviour of the solutions.

\section{Games} \label{sect-games}

\subsection{Preliminaries on viscosity solutions and a comparison principle} \label{sect:preliminaries}

We begin by stating the usual definition of a viscosity solution to \eqref{convex envelope evolution}.

\begin{definition} \label{def.sol.viscosa}
A function  $u:\Omega_T := \Omega \times (0,+\infty) \to \mathbb{R}$  verifies
$$
u_t-\lambda_j ( D^2 u )  = 0
$$
\emph{in the viscosity sense} if the lower and upper semicontinuous envelopes of $u$ given respectively by
$$
\begin{array}{l}
u_\ast (x,t) = \displaystyle\sup_{r>0} \ \inf \{ u(y,s); \quad y\in B_r(x),\ |s-t|<r\}, \\
 \noalign{\vskip 1mm}
u^\ast (x,t) = \displaystyle\inf_{r>0} \ \sup \{ u(y,s); \quad y\in B_r(x),\ |s-t|<r\},
\end{array}
$$
satisfy
\begin{enumerate}
\item for every $\phi\in C^{2}(\Omega_T)$ such that $u_*-\phi $ has a strict
minimum at the point $(x,t) \in \Omega_T$
with $u_*(x,t)=\phi(x,t)$,
we have
$$
\phi_t (x,t)-\lambda_j ( D^2 \phi  (x,t))  \geq 0.
$$

\item for every $ \psi \in C^{2}(\Omega_T)$ such that $ u^*-\psi $ has a
strict maximum at the point $ (x,t) \in \Omega_T$
with $u^*(x,t)=\psi(x,t)$,
we have
$$
\psi_t (x,t) -\lambda_j ( D^2 \psi (x,t) ) \leq 0.
$$
\end{enumerate}
\end{definition}

From our results we will obtain a solution that is continuous in the whole $\overline{\Omega_T}$ and
hence we can avoid the use of $u^*$ and $u_*$ in what follows.

Comparison holds for our equation, see Theorem 8.2 from \cite{CIL}.
Let $\overline{u}$ be a supersolution, that is, it verifies
\begin{equation}\label{supersol}
\left\lbrace\begin{array}{ll}
\overline{u}_t (x,t) - \lambda_j(D^2_x \overline{u}(x,t)) \geq 0, & \text{in } \Omega\times (0,+\infty), \\
\overline{u}(x,t) \geq g(x,t), & \text{on } \partial \Omega \times (0,+\infty), \\
\overline{u}(x,0) \geq u_0(x), & \text{in } \Omega,
\end{array}\right.
\end{equation}
and $\underline{u}$ be a subsolution, that is,
\begin{equation}\label{ec.subsol}
\left\lbrace\begin{array}{ll}
\underline{u}_t (x,t) - \lambda_j(D^2_x \underline{u}(x,t)) \leq 0, & \text{in } \Omega\times (0,+\infty), \\
\underline{u} (x,t)  \leq g(x,t), & \text{on } \partial \Omega \times (0,+\infty), \\
\underline{u} (x,0)  \leq u_0(x), & \text{in } \Omega.
\end{array}\right.
\end{equation}
Notice that the inequalities $\overline{u}_t (x,t) - \lambda_j(D^2_x \overline{u}(x,t)) \geq 0$ and 
$\underline{u}_t (x,t) - \lambda_j(D^2_x \underline{u}(x,t)) \leq 0$ are understood in the viscosity sense 
(see Definition \ref{def.sol.viscosa}), while the other inequalities (that involve boundary/initial data) are understood 
in a pointwise sense. 

\begin{lemma} \label{lemma.compar} Let $\overline{u}$ and $\underline{u}$ verify \eqref{supersol}
and \eqref{ec.subsol} respectively, then 
$$\overline{u} (x,t) \geq \underline{u} (x,t)$$
for every $(x,t) \in \Omega \times (0, +\infty)$.
\end{lemma}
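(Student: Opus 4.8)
The plan is to run the standard doubling-of-variables argument for parabolic viscosity solutions, following Theorem~8.2 in \cite{CIL}. Two structural features of our operator make this routine. First, the map $X\mapsto -\lambda_j(X)$ on symmetric matrices is continuous (in fact $1$-Lipschitz for the operator norm) and \emph{degenerate elliptic}: if $X\le Y$ then $\lambda_j(X)\le\lambda_j(Y)$, which is immediate from the Courant--Fischer characterization $\lambda_j(X)=\min_{\dim S=j}\max_{v\in S,\,|v|=1}\langle Xv,v\rangle$. Second, $F(X)=-\lambda_j(X)$ depends neither on $x$ and $t$ nor on $u$, so the ``structure condition'' (8.5) of \cite{CIL} — the genuinely technical ingredient there — is automatically satisfied.

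Concretely, I would first fix $T>0$ and prove $\underline u\le\overline u$ on $\overline\Omega\times[0,T]$; since $T$ is arbitrary this yields the assertion on all of $\Omega\times(0,+\infty)$. Then I would replace $\underline u$ by $\underline u_\eta(x,t):=\underline u(x,t)-\tfrac{\eta}{T-t}$, which is a \emph{strict} subsolution, $\partial_t\underline u_\eta-\lambda_j(D^2\underline u_\eta)\le-\tfrac{\eta}{(T-t)^2}<0$ in the viscosity sense, still satisfies the boundary and initial inequalities of \eqref{ec.subsol}, and blows down to $-\infty$ as $t\to T^-$; it is enough to prove $\underline u_\eta\le\overline u$ and then let $\eta\to0$. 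Assuming, for contradiction, that $M:=\max_{\overline\Omega\times[0,T]}(\underline u_\eta-\overline u)>0$, I would study, for $\alpha>0$,
\[
\Phi_\alpha(x,t,y,s)=\underline u_\eta(x,t)-\overline u(y,s)-\frac{\alpha}{2}|x-y|^2-\frac{\alpha}{2}|t-s|^2
\]
on $(\overline\Omega\times[0,T])^2$, take a maximum point $(x_\alpha,t_\alpha,y_\alpha,s_\alpha)$, and invoke the usual penalization lemmas: $\alpha(|x_\alpha-y_\alpha|^2+|t_\alpha-s_\alpha|^2)\to0$, $\Phi_\alpha\to M$, and any limit point of these points is a maximizer of $\underline u_\eta-\overline u$ with value $M>0$. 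Since $\underline u_\eta\le\overline u$ on the parabolic boundary $(\partial\Omega\times[0,T])\cup(\overline\Omega\times\{0\})$ and $\underline u_\eta\to-\infty$ near $t=T$, such a limit point must lie in $\Omega\times(0,T)$, hence for $\alpha$ large so do $(x_\alpha,t_\alpha)$ and $(y_\alpha,s_\alpha)$. Applying the parabolic maximum principle for semicontinuous functions (the parabolic theorem on sums, see \cite{CIL}) there, I obtain $a\in\R$ and symmetric matrices $X\le Y$ with $(a,\alpha(x_\alpha-y_\alpha),X)$ in the closure of the parabolic superjet of $\underline u_\eta$ at $(x_\alpha,t_\alpha)$ and $(a,\alpha(x_\alpha-y_\alpha),Y)$ in the closure of the parabolic subjet of $\overline u$ at $(y_\alpha,s_\alpha)$, the time slots coinciding because the penalization $\tfrac{\alpha}{2}|t-s|^2$ is symmetric. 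The strict subsolution inequality gives $a-\lambda_j(X)\le-\tfrac{\eta}{(T-t_\alpha)^2}$, the supersolution inequality gives $a-\lambda_j(Y)\ge0$, and subtracting yields $\lambda_j(X)-\lambda_j(Y)\ge\tfrac{\eta}{(T-t_\alpha)^2}>0$, contradicting $\lambda_j(X)\le\lambda_j(Y)$. Hence $M\le0$, i.e.\ $\underline u_\eta\le\overline u$, and letting $\eta\to0$ finishes the proof.

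I do not expect a serious obstacle, since the operator is $x$-independent: the delicate estimate in \cite{CIL} becomes vacuous and the comparison between $\lambda_j(X)$ and $\lambda_j(Y)$ uses only monotonicity of eigenvalues. The two points that deserve attention are (i) checking that the penalized maximum does not escape to the parabolic boundary — this is exactly where the boundary/initial inequalities in \eqref{supersol}--\eqref{ec.subsol} and the blow-up of $\tfrac{\eta}{T-t}$ enter, and it is the reason why no geometric condition on $\partial\Omega$ is needed for comparison (unlike for existence); and (ii) the regularity of $\underline u$ and $\overline u$ — upper semicontinuity of $\underline u$ and lower semicontinuity of $\overline u$ on the compact set $\overline\Omega\times[0,T]$ suffice for the maxima and the jets to make sense, and in our setting these functions will in fact be continuous up to the closure, so the semicontinuous envelopes in Definition~\ref{def.sol.viscosa} can be dispensed with.
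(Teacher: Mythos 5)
Your proposal is correct, and it coincides with the paper's approach: the paper gives no written proof of Lemma~\ref{lemma.compar}, simply invoking Theorem~8.2 of \cite{CIL}, and your argument is exactly the doubling-of-variables proof of that theorem specialized to $F(X)=-\lambda_j(X)$, with the two relevant verifications (degenerate ellipticity via Courant--Fischer and the vacuousness of the structure condition for an $x$-independent operator) carried out correctly. The only cosmetic difference is that you double the time variable as well, whereas the standard CIL parabolic argument keeps a single time variable; both work.
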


As an immediate consequence of this result uniqueness of continuous viscosity solutions to our problem 
\eqref{convex envelope evolution} follows.

\subsection{Existence using games} \label{sect-games.subsect}

Let $\Omega \subset\R^N$ be a bounded open set and $T>0$.
We define $\Omega_T=\Omega\times (0,T]$. Two
values, $\eps>0$ and $j\in\{1,\dots,N\}$, are given.
The game under consideration is a two-player zero-sum game that is played in the domain $\Omega_T$. 
Initially, a token is placed at some point $(x_0,t_0)\in \Omega_T$. 
Player~I chooses a subspace $S$ of dimension $j$
and then Player~II chooses one unitary vector, $v$, in the subspace $S$.
Then the position of the token is moved to $(x_0\pm \eps v,t_0-\frac{\eps^2}{2})$ with equal probabilities. 
After the first round, the game continues from $(x_1,t_1)$ according to the same rules.
This procedure yields a sequence of game states
\[
(x_0,t_0),(x_1,t_1),\ldots
\]
 where every $x_k$ is a random variable.
The game ends when the token leaves $\Omega_T$, at this point the 
token will be in the parabolic boundary strip of width $\eps$ given by
\[
\Gamma^\eps_T=
\left(\Gamma^\eps\times\left[-\frac{\eps^2}{2},T\right]\right)
\cup
\left(\Omega\times\left[-\frac{\eps^2}{2},0\right]\right)
\]
where
\[
\begin{split}\Gamma^\eps=
\{x\in {\mathbb{R}}^N \setminus \Omega \,:\,\dist(x,\partial \Omega )\leq \eps\}.
\end{split}
\]
We denote by $(x_\tau,t_\tau )\in \Gamma^\eps_T$ the first point in the
sequence of game states that lies in $\Gamma^\eps_T$, so that $\tau$
refers to the first time we hit $\Gamma^\eps_T$.
At this time the game ends with the final payoff given by $h(x_\tau,t_\tau)$, where
$h:\Gamma^\eps_T \to\R$ is a given continuous function that we call \emph{payoff function}.
Player~I earns $-h(x_\tau,t_\tau)$ while Player~II
earns $h(x_\tau,t_\tau)$ (recall that this game is a zero-sum game). For our purposes we choose
\begin{equation}\label{h def}
h(x,t) = \left\{ 
\begin{array}{ll}
g(x,t), \qquad  & x\in\partial\Omega, t >  0, \\
u_0(x) , \qquad & x \in\Omega, t = 0.
\end{array}
\right.
\end{equation}

A strategy $S_\I$ for Player~I, the player seeking to minimize the final payoff, is a function defined on the
partial histories that at every step of the game gives a $j-$dimensional subspace $S$ 
\[
S_\I{\left(t_0,x_0,x_1,\ldots,x_k\right)}=S\in  Gr(j,\R^N).
\]
A strategy $S_\II$ for Player~II, who seeks to maximize the final payoff, is a function defined on the
partial histories that at every step of the game gives a unitary vector in a prescribed $j-$dimensional subspace $S$ 
\[
S_\II{\left(t_0,x_0,x_1,\ldots,x_k,S\right)}=v\in S.
\]

When the two players fix their strategies $S_I$ and $S_{II}$  we can compute the expected outcome as follows:
Given the sequence $(x_0,t_0),(x_1,t_1),\ldots,(x_k,t_k)$  
 in $\Omega_T$,
the next game position is distributed according to the probability
\[
\begin{split}
\pi_{S_\I,S_\II}&((x_0,t_0),(x_1,t_1),\ldots,(x_k,t_k),{A})
= \frac12 \delta_{(x_k+\eps v, t_k-\frac{\eps^2}{2})}(A)+
\frac12 \delta_{(x_k-\eps v, t_k-\frac{\eps^2}{2})}(A),
\end{split}
\]
for all $A\subset \Omega_T \cup \Gamma_T^\eps$, where $v=S_\II{\left(t_0,x_0,x_1,\ldots,x_k,S_\I{\left(t_0,x_0,x_1,\ldots,x_k\right)}\right)}$.
By using the one step transition probabilities and Kolmogorov's extension theorem, we can build a
probability measure $\mathbb{P}^{x_0,t_0}_{S_\I,S_\II}$ on the
game sequences for which the initial position is $(x_0,t_0)$, that we call $H^\infty$. The expected payoff, when starting from $(x_0,t_0)$ and
using the strategies $S_\I,S_\II$, is then computed according to this probability as
\begin{equation}
\label{eq:expectation}
\mathbb{E}_{S_{\I},S_\II}^{x_0,t_0}\left[h(x_\tau,t_\tau)\right]=\int_{H^\infty} h(x_\tau,t_\tau) \ud
\mathbb{P}^{x_0,t_0}_{S_\I,S_\II}.
\end{equation}

The \emph{value of the game for Player I} is define as
\[
u^\eps_\I(x_0,t_0)=\inf_{S_\I}\sup_{S_{\II}}\,
\mathbb{E}_{S_{\I},S_\II}^{x_0,t_0}\left[h(x_\tau,t_\tau)\right]
\]
while the \emph{value of the game for Player II} as
\[
u^\eps_\II(x_0,t_0)=\sup_{S_{\II}}\inf_{S_\I}\,
\mathbb{E}_{S_{\I},S_\II}^{x_0,t_0}\left[h(x_\tau,t_\tau)\right].
\]
Intuitively, the values $u_\I^\eps(x_0,t_0)$ and $u_\II^\eps(x_0,t_0)$ are the best
expected outcomes each player can expect when the game starts at
$(x_0,t_0)$. 
If these two values coincide, $u^\eps_\I= u^\eps_\II$, we say that the game has a value.

Let us observe that the game ends after at most a finite number of steps, in fact, we have
$$\tau \leq { \left\lceil \frac{2T}{\eps^2}\right\rceil }.$$
Hence, the expected value computed in \eqref{eq:expectation} is well defined.

To see that the game has a value, we can consider $u^\eps$, a function that satisfies the 
Dynamic Programing Principle (DPP) associated with this game, that is given by
\begin{equation*}
\left\{
\begin{array}{ll}
\displaystyle u^\eps (x,t) = 
\inf_{{dim}(S)=j} \sup_{v\in S, |v|=1}
\left\{ \frac{1}{2} u^\eps \Big(x + \eps v,t-\frac{\eps^2}{2}\Big) + \frac{1}{2} u^\eps \Big(x - \eps v,t-\frac{\eps^2}{2}\Big)
\right\}  & (x,t) \in \Omega_T, \\[10pt]
u^\eps (x,t) = h(x,t)  & (x,t) \not\in \Omega_T.
\end{array}
\right.
\end{equation*}
The existence of such a function can be seen defining the function backwards in time.
In fact, given $h(x,t)$ we can compute $u^\eps (x,t)$ for $0<t<\frac{\eps^2}{2}$ using the (DPP) and then continue
with $u^\eps$ for $\frac{\eps^2}{2} \leq t < 2\frac{\eps^2}{2}$, etc.

Now, we want to prove that a function that verifies the (DPP) $u^\eps$, is in fact the value of the game, that is, it holds that $u^\eps=u^\eps_\I= u^\eps_\II$.
We know that $u^\eps_\II\leq u^\eps_\I$, to obtain the equality, we will show that $u^\eps \leq u^\eps_\II$ and $u^\eps_\I\leq u^\eps$.

Given $u^\eps$ a function that verifies the (DPP) and $\eta>0$, we can consider the strategy $S_\II^0$ for Player~II that at every step almost maximize 
$u^\eps (x_k + \eps v,t_k-\frac{\eps^2}{2}) + u^\eps (x_k - \eps v,t_k-\frac{\eps^2}{2})$,
that is
\[
S_\II^0{\left(t_0,x_0,x_1,\ldots,x_k,S\right)}=w\in S
\]
such that
\[
\begin{split}
\frac{1}{2} u^\eps (x_k + \eps w,t_k-\frac{\eps^2}{2}) + \frac{1}{2} u^\eps (x_k - \eps w,t_k-\frac{\eps^2}{2}) \geq
\qquad\qquad\qquad\qquad\qquad\qquad\\
\sup_{v\in S, |v|=1}
\left\{ \frac{1}{2} u^\eps (x_k + \eps v,t_k-\frac{\eps^2}{2}) + \frac{1}{2} u^\eps (x_k - \eps v,t_k-\frac{\eps^2}{2}) \right\}
-\eta 2^{-(k+1)} 
\end{split}
\]

We have
\[
\begin{split}
&\mathbb{E}_{S_\I, S^0_\II}^{x_0,t_0}[u^\eps(x_{k+1},t_{k+1})-\eta 2^{-(k+1)}|\, x_0,\ldots,x_k]
\\
&\qquad \qquad \geq
\inf_{S , {dim}(S)=j} \sup_{v\in S, |v|=1}
\left\{ \frac{1}{2} u^\eps (x_k + \eps v,t_k-\frac{\eps^2}{2}) + \frac{1}{2} u^\eps (x_k - \eps v,t_k-\frac{\eps^2}{2})
\right\}
\\
& \qquad\qquad\qquad\qquad\qquad\qquad\qquad\qquad -\eta 2^{-(k+1)}-\eta 2^{-(k+1)}
\\
&\qquad \qquad \geq u^\eps(x_k,t_k)-\eta 2^{-k},
\end{split}
\]
where we have estimated the strategy of Player I by $\inf$ and used that $u^\eps$ satisfies the (DPP).
Thus
\[
M_k=u^\eps(x_k,t_k)-\eta2^{-k} 
\]
is a submartingale. Now, we have
\[
\begin{split}
u^\eps_\II(x_0,t_0) 
&=\sup_{S_\II}\inf_{S_{\I}}\, \mathbb{E}_{S_{\I},S_\II}^{x_0,t_0}\left[h(x_\tau,t_\tau)\right]\\
&\geq\inf_{S_{\I}}\, \mathbb{E}_{S_{\I},S^0_\II}^{x_0,t_0}\left[h(x_\tau,t_\tau)\right]\\
&\geq \inf_{S_\I} \mathbb{E}_{S_{\I},S^0_\II}^{x_0,t_0}[M_{\tau}]\\
&\geq \inf_{S_\I}\mathbb{E}_{S_{\I},S^0_\II}^{x_0,t_0}[M_0]=u^\eps(x_0,t_0)-\eta,
\end{split}
\]
where we used the optional stopping theorem for $M_{k}$.
Since $\eta$ is arbitrary small, this proves that $u^\eps_\II \geq u^\eps$.
Analogously, we can consider a strategy $S_1^0$ for Player~I to prove that $u^\eps\geq u^\eps_\I$. 
This shows that the game has a value that can be characterized as the solution to the (DPP).

Our next aim now is to pass to the limit in the values of the game
$$
u^\eps \to u
$$
as $\eps\to 0$ and obtain in this limit process a viscosity solution to \eqref{convex envelope evolution}.

We will use the following Arzela-Ascoli type lemma, to obtain a convergent subsequence $u^\eps \to u$. 
For its proof we refer to Lemma~4.2 from \cite{MPRb}.

\begin{lemma}\label{lem.ascoli.arzela}
Let $\{u^\eps : \overline{\Omega} \times [0,T]\to \R,\ \eps>0\}$ be a set of functions such that
\begin{enumerate}
\item there exists $C>0$ such that $\abs{u^\eps (x,t)}<C$ for every $\eps >0$ and every $(x,t) \in \overline{\Omega}\times [0,T]$,
\item given $\eta>0$ there are constants $r_0$ and $\eps_0$ such that for every $\eps < \eps_0$ and any $x, y \in \overline{\Omega}$ with $|x - y | < r_0 $ and for every $t,s\in [0,T]$ with $|t-s| < r_0$ it holds
$$
|u^\eps (x,t) - u^\eps (y,s)| < \eta.
$$
\end{enumerate}
Then, there exists  a uniformly continuous function 
$u:\overline{\Omega} \times [0,T] \to \R$ and a subsequence still denoted by $\{u^\eps \}$ such that
\[
\begin{split}
u^{\eps}\to u \qquad\textrm{ uniformly in}\quad\overline{\Omega} \times [0,T],
\end{split}
\]
as $\eps\to 0$.
\end{lemma}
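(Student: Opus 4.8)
The plan is to run the classical Arzel\`a--Ascoli argument --- diagonal extraction of a pointwise-convergent subsequence on a countable dense set, followed by an upgrade to uniform convergence --- while paying attention to the fact that hypothesis (ii) only provides equicontinuity for $\eps<\eps_0$, not for every member of the family at once. Throughout I use that $Q:=\overline{\Omega}\times[0,T]$ is a compact metric space (since $\Omega$ is bounded, so $\overline{\Omega}$ is compact, and $[0,T]$ is compact), and that the family is uniformly bounded by (i).

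First I would fix any sequence $\eps_k\to 0$ and a countable dense subset $D=\{z_1,z_2,\dots\}$ of $Q$. Because $|u^{\eps_k}(z_1)|<C$ for all $k$, the Bolzano--Weierstrass theorem yields a subsequence along which $u^{\eps_k}(z_1)$ converges; extracting successively at $z_2,z_3,\dots$ and passing to the diagonal subsequence (still denoted $\{\eps_k\}$, and still satisfying $\eps_k\to 0$) produces one sequence along which $\lim_k u^{\eps_k}(z)$ exists for every $z\in D$. Denote this limit by $u(z)$ for $z\in D$.

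Next I would show that $\{u^{\eps_k}\}$ is uniformly Cauchy on $Q$, which is the step where the asymptotic character of (ii) enters. Given $\eta>0$, choose $r_0,\eps_0$ as in (ii). By compactness of $Q$ and density of $D$ there are finitely many points $w_1,\dots,w_M\in D$ such that every $q\in Q$ lies within $r_0$ (in both the space and time variables) of some $w_i$. Since each $u^{\eps_k}(w_i)$ converges, pick $k_0$ so large that $\eps_k<\eps_0$ for $k\geq k_0$ and $|u^{\eps_k}(w_i)-u^{\eps_l}(w_i)|<\eta$ for all $i$ and all $k,l\geq k_0$. For an arbitrary $q\in Q$ with nearby $w_i$, the triangle inequality gives, for $k,l\geq k_0$,
\[
|u^{\eps_k}(q)-u^{\eps_l}(q)|\leq |u^{\eps_k}(q)-u^{\eps_k}(w_i)|+|u^{\eps_k}(w_i)-u^{\eps_l}(w_i)|+|u^{\eps_l}(w_i)-u^{\eps_l}(q)|<3\eta,
\]
where the two outer terms are controlled by (ii) (applicable precisely because $\eps_k,\eps_l<\eps_0$ and $q$ is within $r_0$ of $w_i$) and the middle term by the Cauchy property on the finite set. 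As $\eta>0$ is arbitrary, $\{u^{\eps_k}\}$ is uniformly Cauchy, hence converges uniformly on $Q$ to a function $u$ that extends the pointwise limit on $D$.

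Finally, I would verify that the limit $u$ is uniformly continuous. Fixing $\eta>0$ and the associated $r_0,\eps_0$, choose $k$ with $\eps_k<\eps_0$ and $\|u^{\eps_k}-u\|_\infty<\eta$; then for points $(x,t),(y,s)$ within $r_0$ one bounds $|u(x,t)-u(y,s)|$ by inserting $u^{\eps_k}$ and applying (ii), obtaining $3\eta$, so $u$ is uniformly continuous. The main (and essentially only) obstacle is the asymptotic nature of hypothesis (ii): one cannot invoke equicontinuity for fixed early members of the family, so every estimate must be performed along tails of the subsequence on which $\eps_k<\eps_0$. Arranging the extraction so that $\eps_k\to 0$ makes this automatic, and the rest is the standard compactness bookkeeping.
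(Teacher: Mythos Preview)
Your argument is correct: the diagonal extraction on a countable dense set, the uniformly-Cauchy upgrade using the asymptotic equicontinuity (ii) only along tails where $\eps_k<\eps_0$, and the verification that the uniform limit is uniformly continuous are all carried out properly. Note that the paper does not supply its own proof of this lemma but simply cites Lemma~4.2 of \cite{MPRb}; your write-up is essentially the standard proof one finds there, so there is no meaningful difference in approach to report.
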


So, our goal now is to show that the family $u^\eps$ satisfies the hypotheses of the previous lemma.
First, let us observe that
$$
\min h \leq u^\eps (x,t) \leq \max h
$$
for every $(x,t) \in \overline{\Omega} \times [0,T]$.
To prove that $u^\eps$ satisfies the second condition in Lemma \ref{lem.ascoli.arzela} 
we will have to make some geometric assumptions on the domain.
As in \cite{BlancRossi}, given $y\in\partial\Omega$ we assume that there exists
$r>0$ such that for every $\delta>0$ there exists $T \subset\R^N$ a subspace of dimension $j$, $w\in\R^N$ of norm 1,
$\lambda>0$ and $\theta>0$ such that
\begin{equation}
\tag{$F_j$}
\{x\in\Omega\cap B_r(y)\cap T_\lambda: \langle w,x-y\rangle<\theta\}\subset B_\delta(y)
\end{equation}
where 
\[
T_\lambda=\{x\in\R^N: d(x-y,T)<\lambda\}.
\]
For our game with a fixed $j$ we will assume that $\Omega$ satisfies both ($F_j$) and ($F_{N-j+1}$).
As we mentioned in the introduction, observe that every strictly convex domain verifies ($F_j$) for any $1\leq j
\leq N$.

The key point to obtain the asymptotic equicontinuity required in the second condition in Lemma
\ref{lem.ascoli.arzela} is to obtain the bound for $(x,t)\in \Omega_T$ and $(y,s) \in \Gamma^\eps_T$.
For the case $(x,t),(y,s) \in \Gamma^\eps_T$ the bound follows from the uniform continuity of $h$ in $\Gamma^\eps_T$.
For the case $(x,t),(y,s)\in \Omega_T$ we argue as follows.
We fix the strategies $S_\I, S_\II$ for the game starting at $(x,t)$.
We define a virtual game starting at $(y,s)$ using the same random steps as the game starting at $(x,t)$.
Furthermore, the players adopt their strategies $S_\I, S_\II$ from the game starting at $(x,t)$, that is,
when the game position is $(y_k,s_k)$ a player make the choices that would have taken at $(x_k,t_k)$ while playing the game starting at $(x,t)$.
We proceed in this way until for the first time one of the positions leave the parabolic domain, that is, until $(x_k,t_k) \in \Gamma_T^\eps$ 
or $(y_k,s_k) \in \Gamma^\eps_T$.
At that point we have $|(x_k,t_k)-(y_k,s_k)| =|(x,t)-(y,s)|$, and the desired estimate follow from the one for 
for $x_k, y_k \in\Gamma_\eps$ (in the case that both positions leave the domain at the same turn, $k$)
or $x_k \in \Omega$, $y_k\in \Gamma_\eps$ (if only one have leaved the domain).

Thus, we can concentrate on the case $(x,t)\in \Omega_T$ and $(y,s) \in \Gamma^\eps_T$.
We can assume that $(y,s)\in\partial_P\Omega_T$.
If we have the bound for those points we can obtain a bound for a point $(y,s) \in \Gamma^\eps_T$ 
just by considering $(z,u)\in\partial_P\Omega_T$ close to $(x,t)$ and $(y,s)$.
If $s<0$, we can consider the point $(x,0)$ and for $y\not \in\Omega$ we can consider $(z,t)$ with $z\in\partial\Omega$ a point in the line segment  that joins $x$ and $y$.

Hence, we have to handle two cases.
In the first one we have to prove that $|u^\eps(x,t)-u^\eps(x,0)|<\eta$ for $x\in\Omega$ and $0<t<r_0$.
In the second one we have to prove that $|u^\eps(x,t)-u^\eps(y,t)|<\eta$ for $x\in\Omega$, $y\in \partial\Omega$ such that $|x-y|<r_0$ and $0<t\leq T$.

In the first case we have
$$
u^\eps (x,0) = u_0(x),
$$
we have to show that the game starting at $(x,t)$ will not end too far a way from $(x,0)$.
We have $-\frac{\eps^2}{2}< t_\tau<t$, so we have to obtain a bound for $|x-x_\tau|$.
To this end we consider $M_k=|x_k-x|^2-\eps^2k$. 
We have
\begin{equation}\label{martingale}
\begin{split}
& \mathbb{E}^{x,t}_{S_I,S_{II}}[|x_{k+1}-x|^2-\eps^2(k+1)|x,x_1,\dots,x_k] \\
&\qquad =\frac{|x_k+\eps v_k-x|^2+|x_k-\eps v_k-x|^2}{2}-\eps^2(k+1)\\
& \qquad =|x_k-x|^2+\eps^2|v_k|^2-\eps^2(k+1) \\
& \qquad  =M_k.
\end{split}
\end{equation}
Hence, $M_k$ is a martingale.
By applying the optional stopping theorem, we obtain
\begin{equation}\label{stoppingbound}
\mathbb{E}^{x,t}_{S_I,S_{II}}[|x_\tau-x|^2]
=\eps^2\mathbb{E}^{x,t}_{S_I,S_{II}}[\tau]
\leq \eps^2 \left\lceil \frac{2t}{\eps^2}\right\rceil
\leq \eps^2 \left\lceil \frac{2r_0}{\eps^2}\right\rceil
\leq \eps^2+2r_0
\leq \eps_0^2+2r_0
\end{equation}

Hence, using
\[
\mathbb{E}^{x,t}_{S_I,S_{II}}[|x_\tau-x|^2]\geq 
\mathbb{P} (|(x_\tau,t_\tau)-(x,0)|\geq \delta )\delta^2,
\]
we obtain
\[
\mathbb{P} (|(x_\tau,t_\tau)-(x,0)|\geq \delta)\leq \frac{\eps_0^2+2r_0}{\delta^2}.
\]
With this bound, we can obtain the desired result as follows:
\begin{equation}\label{diffbound}
\begin{split}
|u_\eps (x,t) - h(x,0)| 
&\leq \mathbb{P}(|(x_\tau,t_\tau)-(x,0)|< \delta ) \times \sup_{(x_\tau,t_\tau)\in B_\delta(x,0)}|h (x_\tau,t_\tau) - h(x,0)|
\\
& \qquad \qquad  + \mathbb{P}(|(x_\tau,t_\tau)-(x,0)|\geq \delta)) 2 \max |h| \\
&\leq \sup_{(x_\tau,t_\tau)\in B_\delta(x,0)}|h (x_\tau,t_\tau) - h(x,0)| +\frac{(\eps_0^2+2r_0)2 \max |h|}{\delta^2} <\eta
\end{split}
\end{equation}
if $\delta$, $\eps_0$ and $r_0$ are small enough.

Now we move to the second case, we have $u^\eps (y,s) = g(y,s)$,
Here, we need to make the geometric assumptions 
($F_j$) and ($F_{N-j+1}$) on $\partial \Omega$. In this parabolic game we have an extra difficulty compared with the elliptic case
treated in \cite{BlancRossi},
we have to make an extra effort to bound the amount of time that it takes for the game to end.

We start with the case $j=1$, in this case we assume ($F_N$). This condition reads as follows:
For every $y\in\partial\Omega$ we assume that there exists
$r>0$ such that for every $\delta>0$ there exists $w\in\R^N$ of norm 1 and $\theta>0$ such that
\begin{equation} \label{cond.geoN}
\{x\in\Omega\cap B_r(y): \langle w,x-y\rangle<\theta\}\subset B_\delta(y).
\end{equation}

Let us observe that for any possible choice of the direction $v$ at every step
we have that the projection of the position of the game, $x_n$, in the direction of a fixed unitary vector $w$, that is,  
$$
\left\langle x_n-y, w\right\rangle,
$$
is a martingale.
We fix $r>0$ and consider $\tilde\tau$, the first time $x$ leaves $\Omega$ or $B_r(y)$.
Hence
\begin{equation}\label{boundE}
\mathbb{E} \left\langle x_{\tilde\tau}-y, w \right\rangle \leq \left\langle x-y, w \right\rangle \leq d(x,y)  < r_0.
\end{equation}
We consider the vector $w$ given by the geometric assumption on $\Omega$, we have that $$\left\langle x_n-y, w\right\rangle \geq -\eps.$$
Therefore, \eqref{boundE} implies that
$$
\mathbb{P} \left( \left\langle x_{\tilde\tau}-y, w \right\rangle  > r_0^{1/2} \right) r_0^{1/2} -
\left(1-\mathbb{P} \left(  \left\langle x_{\tilde\tau}-y, w \right\rangle  > r_0^{1/2} \right) \right)\eps < r_0.
$$
Hence, we have (for every $\eps>\eps_0$ small enough)
$$
\mathbb{P} \left( \left\langle x_{\tilde\tau}-y, w \right\rangle  > r_0^{1/2} \right)  <  2 r_0^{1/2}.
$$
Then, \eqref{cond.geoN} implies that given $\delta>0$ we can conclude that
$$
\mathbb{P} ( d ( x_{\tilde\tau}, y)  > \delta )  <  2  r_0^{1/2}.
$$
by taking $r_0$ small enough and an appropriate $w$.

Hence, $d(x_{\tilde\tau}, y)\leq \delta$ with probability close to one, and in this case the point $x_{\tilde\tau}$ is actually the point where the process 
has leaved $\Omega$, that is $\tilde\tau=\tau$.
When $d(x_\tau, y)\leq \delta$, by the same martingale argument used in \eqref{stoppingbound}, we obtain
\[
\mathbb{E}[t-t_\tau]=
\mathbb{E}\left[\frac{\eps^2}{2}\tau\right]=
\frac{\mathbb{E}[|x_\tau-x|^2]}{2}\leq \frac{\delta^2}{2} .
\]
Hence,
\[
\mathbb{P}(t-t_\tau>\delta)\leq\frac{\delta}{2}
\]
and the bound follows as in \eqref{diffbound}.

In the general case, for any value of $j$, we can proceed in the same way.
In order to be able to use condition $(F_j)$, we have to argue that the points $x_n$ involved in our argument belong to $T_\lambda$.
For $r_0<\lambda$ we have that $x\in T_\lambda$, so if we ensure that at every move $v\in T$ we will have that the game sequence will be contained in $x+T\subset T_\lambda$.

Recall that here we are assuming that both ($F_j$) and ($F_{N-j+1}$) are satisfied.
We can separate the argument into two parts. We will prove on the one hand that $u_\eps (x,t) - g(y,s)<\eta$ 
and on the other that $g(y,s)-u_\eps (x,t)<\eta$.
For the first inequality we can make choices for the strategy for Player~I, and for the second one we can do the same 
for strategies of Player~II. 

Since $\Omega$ satisfies condition ($F_j$), Player~I can make sure that at every move the vector $v$ belongs to $T$ by selecting $S=T$.
This proves the upper bound $u_\eps (x,t) - g(y,s)<\eta$.
On the other hand, since $\Omega$ satisfies ($F_{N-j+1}$), Player~II will be able to select $v$ in a space $S$ of dimension $j$ and hence he can always choose $v\in S\cap T$ since 
\[
\dim(T)+\dim(S)=N-j+1+j=N+1>N.
\]
This shows the lower bound $g(y,s)-u_\eps (x,t)<\eta$.

We have shown that the hypotheses of the Arzela-Ascoli type lemma, Lemma \ref{lem.ascoli.arzela}, are satisfied.
Hence we have obtained uniform convergence of a subsequence of $u^\eps$.

\begin{lemma} Let $\Omega$ be a bounded domain in $\mathbb{R}^N$ satisfying conditions ($F_j$) and ($F_{N-j+1}$).
Then there exists a subsequence of $u^\eps$ that converges uniformly.
That is,
\[
u^{\eps_j} \to u, \qquad \mbox{ as } \eps_j \to 0,
\]
uniformly in $\overline{\Omega}\times [0,T]$, where $u$ is a uniformly continuous function.
\end{lemma}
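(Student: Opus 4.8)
The plan is to verify that the family $\{u^\eps : \overline{\Omega}\times[0,T]\to\R\}_{\eps>0}$ satisfies the two hypotheses of the Arzela--Ascoli type Lemma \ref{lem.ascoli.arzela}, and then simply invoke it. Hypothesis (i), uniform boundedness, is immediate: the (DPP) expresses $u^\eps$ in $\Omega_T$ as an average of its values one time step earlier, and $u^\eps=h$ on $\Gamma^\eps_T$, so an induction over the finitely many time steps gives $\min h\leq u^\eps(x,t)\leq\max h$ on all of $\overline{\Omega}\times[0,T]$, uniformly in $\eps$. The substance is hypothesis (ii), the asymptotic equicontinuity: given $\eta>0$ one must produce $r_0,\eps_0>0$ with $|u^\eps(x,t)-u^\eps(y,s)|<\eta$ whenever $\eps<\eps_0$, $|x-y|<r_0$ and $|t-s|<r_0$.

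I would establish (ii) by reducing, in stages, to two boundary estimates. When both points lie in $\Gamma^\eps_T$ the bound is just the uniform continuity of $h$ there. When both points lie in $\Omega_T$, I would run the coupling (``virtual game'') argument: start a second game at $(y,s)$ driven by the \emph{same} coin tosses and the same strategic responses as the game at $(x,t)$, so that the space-time difference of the two tokens is frozen until one of them exits; at that instant both tokens are within $|(x,t)-(y,s)|$ of $\Gamma^\eps_T$, so this case is absorbed into the mixed case $(x,t)\in\Omega_T$, $(y,s)\in\Gamma^\eps_T$. In the mixed case one may take $(y,s)$ on the parabolic boundary of $\Omega_T$, leaving two subcases: (A) $|u^\eps(x,t)-u^\eps(x,0)|<\eta$ for $x\in\Omega$, $0<t<r_0$; and (B) $|u^\eps(x,t)-u^\eps(y,t)|<\eta$ for $x\in\Omega$, $y\in\partial\Omega$, $|x-y|<r_0$. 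For (A) I would use the martingale $M_k=|x_k-x|^2-\eps^2k$ of \eqref{martingale}: optional stopping with $\tau\leq\lceil 2t/\eps^2\rceil$ bounds $\mathbb{E}[|x_\tau-x|^2]$ by $\eps_0^2+2r_0$ as in \eqref{stoppingbound}, Chebyshev turns this into a bound on $\mathbb{P}(|(x_\tau,t_\tau)-(x,0)|\geq\delta)$, and splitting $h(x_\tau,t_\tau)-h(x,0)$ over that event and its complement as in \eqref{diffbound} closes the estimate for $\delta,r_0,\eps_0$ small. For (B) I would use the geometric conditions: the projection $\langle x_n-y,w\rangle$ onto a fixed unit $w$ is a martingale for \emph{every} choice of $v$, so picking $w$ from $(F_j)$ (or $(F_N)$ when $j=1$), stopping at the first exit of $\Omega$ or $B_r(y)$, and using the one-sided bound $\langle x_n-y,w\rangle\geq-\eps$ forces the game to leave $\Omega$ within distance $\delta$ of $y$ with probability close to one, as in \eqref{boundE}--\eqref{cond.geoN}; conditioned on this, the same quadratic-martingale argument bounds the elapsed time $t-t_\tau$, and \eqref{diffbound} finishes. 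To make $(F_j)$ applicable one keeps the sequence inside the slab $T_\lambda$: Player~I forces $v\in T$ by playing $S=T$, giving $u^\eps(x,t)-g(y,t)<\eta$, and since $\dim T+\dim S=(N-j+1)+j>N$, Player~II can always choose $v\in S\cap T$ using $(F_{N-j+1})$, giving the matching lower bound $g(y,t)-u^\eps(x,t)<\eta$.

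The main obstacle — and the genuinely new point relative to the elliptic problem of \cite{BlancRossi} — is the \emph{timing} in subcase (B): it is not enough that the game exits near the correct boundary point $y$ in space; one must also control the number of steps so that $t-t_\tau$ is small and $(x_\tau,t_\tau)$ is close to $(y,t)$ in space-time, which is exactly where the continuity of $g$ is used. This forces the argument to chain two martingale estimates, the projection martingale to control \emph{where} the game stops and the quadratic martingale $|x_k-x|^2-\eps^2k$ to control \emph{when}. Once (i) and (ii) are in hand, Lemma \ref{lem.ascoli.arzela} yields a subsequence $u^{\eps_j}\to u$ uniformly on $\overline{\Omega}\times[0,T]$ with $u$ uniformly continuous, which is the assertion of the lemma.
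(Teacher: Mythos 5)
Your proposal is correct and follows essentially the same route as the paper: verifying the two hypotheses of the Arzel\`a--Ascoli type lemma, with the same case decomposition (coupled virtual games for interior pairs, the quadratic martingale $|x_k-x|^2-\eps^2 k$ for the initial-time estimate, and the projection martingale $\langle x_n-y,w\rangle$ combined with conditions ($F_j$) and ($F_{N-j+1}$) for the lateral boundary estimate, chained with the quadratic martingale to control the exit time). No meaningful differences from the paper's argument.
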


Now, let us prove that any possible uniform limit of $u^\eps$ is a viscosity solution to
the limit PDE problem. This result shows existence of a continuous up to the boundary solution
defined in $\overline{\Omega} \times [0,T]$
for every $T>0$. Uniqueness of this viscosity solution follows from the comparison principle stated in Lemma 
\ref{lemma.compar}.

\begin{theorem}
Let $u$ be a uniform limit of the values of the game $u^\eps$. Then $u$ is a viscosity
solution to \eqref{convex envelope evolution}.
\end{theorem}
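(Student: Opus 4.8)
The plan is to check separately that the uniform limit $u$ attains the prescribed parabolic boundary data and that it solves the equation in the interior in the viscosity sense. For the boundary and initial conditions I would just pass to the limit in the continuity estimates already obtained: the bound \eqref{diffbound} (and its counterpart near the lateral boundary, coming from conditions ($F_j$) and ($F_{N-j+1}$)) shows that for every $\eta>0$ there are $r_0,\eps_0>0$ with $|u^\eps(x,t)-h(y,s)|<\eta$ whenever $(x,t)\in\Omega_T$, $(y,s)\in\partial_P\Omega_T$, $|(x,t)-(y,s)|<r_0$ and $\eps<\eps_0$. Letting $\eps\to 0$ along the convergent subsequence, then $(x,t)\to(y,s)$, then $\eta\to 0$, we get $u(y,s)=h(y,s)$ on $\partial_P\Omega_T$, that is $u(x,t)=g(x,t)$ on $\partial\Omega\times(0,\infty)$ and $u(x,0)=u_0(x)$ in $\Omega$; continuity of $u$ on $\overline{\Omega}\times[0,T]$ is already part of the previous lemma.

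For the interior equation I would run the standard ``test function against the DPP'' argument. Let $\phi\in C^2(\Omega_T)$ be such that $u-\phi$ has a strict local minimum at $(x_0,t_0)\in\Omega_T$ with $u(x_0,t_0)=\phi(x_0,t_0)$; the goal is $\phi_t(x_0,t_0)-\lambda_j(D^2\phi(x_0,t_0))\geq 0$. Fix a small ball $\overline{B}=\overline{B_\rho(x_0,t_0)}\subset\Omega_T$ and pick $(x_\eps,t_\eps)\in\overline{B}$ with $(u^\eps-\phi)(x_\eps,t_\eps)\leq \inf_{\overline{B}}(u^\eps-\phi)+\eps^3$. By the uniform convergence $u^\eps\to u$ together with the strictness of the minimum of $u-\phi$ one has $(x_\eps,t_\eps)\to(x_0,t_0)$ and $u^\eps(x_\eps,t_\eps)\to u(x_0,t_0)$, and $u^\eps(y,s)-\phi(y,s)\geq u^\eps(x_\eps,t_\eps)-\phi(x_\eps,t_\eps)-\eps^3$ for every $(y,s)\in\overline{B}$. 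Inserting this lower bound for $u^\eps(x_\eps\pm\eps v,t_\eps-\eps^2/2)$ into the DPP at $(x_\eps,t_\eps)$ and cancelling the common term $u^\eps(x_\eps,t_\eps)$ yields
\[
\phi(x_\eps,t_\eps)\ \geq\ \inf_{\dim S=j}\ \sup_{v\in S,\,|v|=1}\Big\{\tfrac12\phi\big(x_\eps+\eps v,t_\eps-\tfrac{\eps^2}{2}\big)+\tfrac12\phi\big(x_\eps-\eps v,t_\eps-\tfrac{\eps^2}{2}\big)\Big\}-\eps^3 .
\]

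Now I would Taylor expand. Expanding in $x$ to second order (the odd terms cancel by the $\pm v$ symmetry) and in $t$ to first order, uniformly for $v\in\mathbb{S}^{N-1}$,
\[
\tfrac12\phi\big(x_\eps+\eps v,t_\eps-\tfrac{\eps^2}{2}\big)+\tfrac12\phi\big(x_\eps-\eps v,t_\eps-\tfrac{\eps^2}{2}\big)=\phi(x_\eps,t_\eps)-\tfrac{\eps^2}{2}\phi_t(x_\eps,t_\eps)+\tfrac{\eps^2}{2}\langle D^2\phi(x_\eps,t_\eps)v,v\rangle+o(\eps^2).
\]
Substituting, dividing by $\eps^2/2$ and letting $\eps\to 0$ (using continuity of $D^2\phi$ and of the map $A\mapsto\lambda_j(A)$ on symmetric matrices) gives $0\geq -\phi_t(x_0,t_0)+\inf_{\dim S=j}\sup_{v\in S,|v|=1}\langle D^2\phi(x_0,t_0)v,v\rangle$. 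By the Courant--Fischer min--max characterization, $\inf_{\dim S=j}\sup_{v\in S,|v|=1}\langle Av,v\rangle=\lambda_j(A)$ for every symmetric $A$ (with eigenvalues ordered increasingly, which matches the game in which Player~I minimizes over $j$-dimensional subspaces and Player~II maximizes over unit vectors), so $0\geq-\phi_t(x_0,t_0)+\lambda_j(D^2\phi(x_0,t_0))$, the supersolution inequality. The subsolution inequality follows by the identical computation with a test function $\psi$ touching $u$ from above at a strict local maximum: the DPP operator is exactly the same, only the direction of the touching and of all the inequalities is reversed, and one obtains $\psi_t(x_0,t_0)-\lambda_j(D^2\psi(x_0,t_0))\leq 0$.

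The only delicate point is the passage to the limit in the operator $\inf_{\dim S=j}\sup_{v\in S,|v|=1}$: one needs the Taylor remainder to be $o(\eps^2)$ uniformly in $v\in\mathbb{S}^{N-1}$ (automatic since $\phi\in C^2$ and the sphere is compact) and uniformly in the moving base points $(x_\eps,t_\eps)$ ranging in the fixed compact set $\overline{B}$, and then the continuity of the $j$-th eigenvalue as a function of the matrix to identify the limit of the min--max with $\lambda_j(D^2\phi(x_0,t_0))$. Everything else is the routine dictionary between the Dynamic Programming Principle and the PDE.
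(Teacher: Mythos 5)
Your proposal is correct and follows essentially the same route as the paper: uniform convergence plus the equicontinuity estimates give the boundary/initial data, and the interior equation is obtained by testing the DPP at approximate extrema $(x_\eps,t_\eps)$ of $u^\eps-\phi$, Taylor expanding, dividing by $\eps^2/2$, and identifying the min--max with $\lambda_j$. The extra care you take with the uniformity of the Taylor remainder in $v$ and with the convergence $(x_\eps,t_\eps)\to(x_0,t_0)$ only makes explicit what the paper leaves implicit.
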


\begin{proof}
First, we observe that since $u^\eps =g$ on $\partial \Omega \times (0,T)$ and $u^\eps(x,0) = u_0(x)$ for $x\in\Omega$, we obtain,
from the uniform convergence, that $u =g$ on $\partial \Omega \times (0,T)$ and $u(x,0) = u_0(x)$ for $x\in\Omega$.
Also, notice that Lemma \ref{lem.ascoli.arzela} gives that the limit function is continuous. 

To check that $u$ is a viscosity supersolution to $\lambda_j(D^2 u)  = 0$ in $\Omega$, 
let $\phi\in C^{2}{ (\Omega_T)}$ be such that $ u-\phi $ has a strict
minimum at the point $(x,t) \in \Omega_T$  with $u(x,t)=
\phi(x,t)$. We need to check that
$$
\phi_t (x,t) - \lambda_j ( D^2 \phi (x,t) )  \geq 0.
$$
As $u^\eps \to u$ uniformly in $\overline{\Omega}\times {[0,T]}$ we have the existence of two sequences
$x_\eps$, $t_\eps$ such that $x_\eps \to x$, $t_\eps \to t$ as $\eps \to 0$ and 
$$
u^\eps (z,s) - \phi (z,s) \geq u^\eps (x_\eps,t_\eps) - \phi (x_\eps,t_\eps) - \eps^3
$$
(remark that $u^\eps$ is not continuous in general). 
Since $u^\eps$ is a solution to
$$
u^\eps (x,t) =
\inf_{{dim}(S)=j} \sup_{v\in S, |v|=1}
\left\{ \frac{1}{2} u^\eps \Big(x + \eps v,t-\frac{\eps^2}{2}\Big) + \frac{1}{2} u^\eps \Big(x - \eps v,t-\frac{\eps^2}{2}\Big)
\right\} 
$$
we obtain that $\phi$ verifies 
\begin{equation} \label{ttt}
\begin{array}{l}
\phi(x_\eps ,t_\eps) - \phi \Big(x_\eps ,t_\eps-\frac{\eps^2}{2}\Big)  \\[10pt]
\geq \displaystyle\inf_{{dim}(S)=j} \sup_{v\in S, |v|=1}
\left\{ \frac{1}{2} \phi \Big(x_\eps + \eps v, t_\eps-\frac{\eps^2}{2}\Big) + \frac{1}{2} \phi \Big(x_\eps - \eps v, t_\eps-\frac{\eps^2}{2}\Big)
- \phi \Big(x_\eps, t_\eps-\frac{\eps^2}{2}\Big)
\right\} - \eps^3.
\end{array}
\end{equation}

Now, consider the second order Taylor
expansion of $\phi$ (to simplify the notation we omit the dependence of $t$ here)
\[
\phi(y)=\phi(x)
+\nabla\phi(x)\cdot(y-x)
+\frac12\langle D^2\phi(x)(y-x),(y-x)\rangle
+o(|y-x|^2)
\]
as $|y-x|\rightarrow 0$. Hence, we have
\[
\phi(x+\eps v)=\phi(x)+\eps \nabla\phi(x)\cdot v
+\eps^2 \frac12\langle D^2\phi(x)v,v\rangle+o(\eps^2)
\]
and
\[
\phi(x- \eps v)=\phi(x) - \eps \nabla\phi(x)\cdot v
+\eps^2 \frac12\langle D^2\phi(x)v,v\rangle+o(\eps^2).
\]
Using these expansions we get
$$
\frac{1}{2} \phi (x_\eps + \eps v) + \frac{1}{2} \phi (x_\eps - \eps v)
- \phi (x_\eps) = \frac{\eps^2}2 \langle D^2 \phi (x_\eps)v, v \rangle + o(\eps^2).
$$

Plugging this into \eqref{ttt} and dividing by $\eps^2/2$, we obtain
$$
\dfrac{ \phi (x_\eps,t_\eps) - \phi \left(x_\eps, t_\eps - \dfrac{\eps^2}{2}\right) }{\eps^2/2}
\geq  \inf_{{dim}(S)=j} \sup_{v\in S, |v|=1}
\left\{ \langle D^2 \phi (x_\eps,t_\eps - \eps^2/2) v, v \rangle + 2\dfrac{o(\eps^2)}{\eps^2} \right\} - 2 \eps.
$$

Therefore, passing to the limit as $\eps \to 0$ in \eqref{ttt} we conclude that
$$
\phi_t (x,t) \geq  \inf_{{dim}(S)=j} \sup_{v\in S, |v|=1}
\Big\{ \langle D^2 \phi (x,t) v, v \rangle 
\Big\}.
$$
which is equivalent to
$$
\phi_t (x,t)  \geq \lambda_j (D^2 \phi (x)) 
$$
as we wanted to prove. 

When we consider a smooth function $\psi$ that touches $u$ from above,
we can obtain the reverse inequality in a similar way.
\end{proof}

\begin{remark}{\rm
From the uniqueness of viscosity solutions to the limit problem (recall that a comparison principle holds) we obtain that the convergence of the whole family $u^\eps$.
That is,
\[
u^\eps \to u
\]
uniformly as $\eps\to 0$ (not only along subsequences).
Hence, we have completed the proof of Theorem~\ref{teo.convergencia.juego}.
}
\end{remark}

\section{Asympitotic behaviour} \label{sect-asymp}

Along this section we restrict our attention to the case where the boundary condition does not depend on the time, that is,
$$
\left\lbrace\begin{array}{ll}
u_t (x,t) - \lambda_j(D^2_x u (x,t)) = 0, & \text{in } \Omega\times (0,+\infty), \\
u(x,t) = g(x), & \text{on } \partial \Omega \times (0,+\infty), \\
u(x,0) = u_0(0), & \text{in } \Omega. 
\end{array}\right.
$$
where $u_0$ is a continuous function defined on $\overline{\Omega}$ and $g = u_0 |_{\partial\Omega}$.

We want to study the asymptotic behaviour as $t\to \infty$ of the solution to this parabolic equation.
We deal with the problem with two different techniques, on the one hand we use pure PDE methods (comparison arguments) 
and on the other hand we use our game theoretical approach.

\subsection{PDE arguments}

We will use the eigenvalue problem associated with $-\lambda_N (D^2u)$.
For every strictly convex domain there is a positive eigenvalue $\mu_1$, with an eigenfunction $\psi_1$
that is  positive inside $\Omega$ and continuous up to the boundary with $\psi_1|_{\partial \Omega} =0$ such that
\begin{equation}\label{autovalor.positivo}
\left\lbrace\begin{array}{ll}
- \lambda_N (D^2 \psi_1) = \mu_1 \psi_1, & \text{in } \Omega, \\
\psi_1 = 0, & \text{on } \partial \Omega.
\end{array}\right.
\end{equation}
This eigenvalue problem was studied in \cite{BirindelliIshii}.
Notice that $\varphi_1 = - \psi_1$ is a negative solution to 
\begin{equation}\label{autovalor.negativo}
\left\lbrace\begin{array}{ll}
- \lambda_1 (D^2 \varphi_1) = \mu_1 \varphi_1, & \text{in } \Omega, \\
\varphi_1 = 0, & \text{on } \partial \Omega.
\end{array}\right.
\end{equation}

We will use the following lemma.

\begin{lemma} \label{lema.matrices}
For any two symmetric matrices $A$, $B$, it holds that
\begin{equation} \label{eq.matrices}
\lambda_1 (A) + \lambda_j (B) \leq \lambda_j (A+B) \leq 
\lambda_N (A) + \lambda_j (B).
\end{equation}
\end{lemma}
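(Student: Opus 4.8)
The plan is to establish the inequality \eqref{eq.matrices} using the variational (Courant--Fischer) characterization of the ordered eigenvalues of a symmetric matrix. Recall that for a symmetric $N\times N$ matrix $M$,
\[
\lambda_j(M) = \min_{\dim(S)=j}\ \max_{v\in S,\ |v|=1} \langle Mv,v\rangle,
\]
where the minimum runs over all $j$-dimensional subspaces $S\subset\R^N$. (This is precisely the characterization that matches the role of $\lambda_j(D^2u)$ in the game, so it is natural here.) I would only prove the right-hand inequality $\lambda_j(A+B)\le \lambda_N(A)+\lambda_j(B)$; the left-hand one then follows by applying it to $-A$ and $-B$ and using $\lambda_j(-M) = -\lambda_{N-j+1}(M)$, together with $\lambda_N(-A) = -\lambda_1(A)$, which flips the statement into the desired lower bound. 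So the whole proof reduces to one inequality.

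For the right-hand inequality, fix a $j$-dimensional subspace $S_B$ that achieves the minimum in the Courant--Fischer formula for $\lambda_j(B)$, i.e. $\max_{v\in S_B,\,|v|=1}\langle Bv,v\rangle = \lambda_j(B)$. Using this same subspace $S_B$ as a competitor in the formula for $\lambda_j(A+B)$ gives
\[
\lambda_j(A+B) \le \max_{v\in S_B,\,|v|=1}\langle (A+B)v,v\rangle
\le \max_{v\in S_B,\,|v|=1}\langle Av,v\rangle + \max_{v\in S_B,\,|v|=1}\langle Bv,v\rangle,
\]
where the second step is the elementary fact that the max of a sum is at most the sum of the maxes. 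The first term is bounded by $\max_{|v|=1}\langle Av,v\rangle = \lambda_N(A)$ and the second term equals $\lambda_j(B)$ by the choice of $S_B$, which yields $\lambda_j(A+B)\le \lambda_N(A)+\lambda_j(B)$.

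There is no real obstacle here: the argument is a routine manipulation of the min-max formula, and the only point requiring a modicum of care is the bookkeeping in the duality step $\lambda_j(-M) = -\lambda_{N-j+1}(M)$ when converting the upper bound into the lower bound. Concretely, replacing $A\mapsto -A$, $B\mapsto -B$ in the right-hand inequality gives $\lambda_j(-(A+B)) \le \lambda_N(-A)+\lambda_j(-B)$, i.e. $-\lambda_{N-j+1}(A+B)\le -\lambda_1(A) - \lambda_{N-j+1}(B)$; relabeling the index $j\leftrightarrow N-j+1$ (which is a bijection of $\{1,\dots,N\}$) then produces exactly $\lambda_1(A)+\lambda_j(B)\le \lambda_j(A+B)$, completing the proof.
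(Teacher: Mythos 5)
Your proof is correct and rests on the same Courant--Fischer min--max characterization $\lambda_j(M)=\min_{\dim S=j}\max_{v\in S,|v|=1}\langle Mv,v\rangle$ that the paper uses. The only (cosmetic) difference is that the paper proves both inequalities directly by bounding $\sup_{v\in S}\langle(A+B)v,v\rangle$ from above by $\sup_{v\in S}\langle Bv,v\rangle+\sup_{|v|=1}\langle Av,v\rangle$ and from below by $\sup_{v\in S}\langle Bv,v\rangle+\inf_{|v|=1}\langle Av,v\rangle$ before taking the infimum over $S$, whereas you prove only the upper bound and obtain the lower one via the duality $\lambda_j(-M)=-\lambda_{N-j+1}(M)$; both routes are equally valid and of comparable length.
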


\begin{proof}
Given a subspace $S$ of dimension $j$, we have
\[
\sup_{v\in S, |v|=1}\langle Bv, v \rangle+ \inf_{|v|=1}\langle Av, v \rangle 
\leq
\sup_{v\in S, |v|=1}\langle(A+B)v, v \rangle 
\leq
\sup_{v\in S, |v|=1}\langle Bv, v \rangle+ \sup_{|v|=1}\langle Av, v \rangle .
\]
Hence, the first inequality follows from
$$
\begin{array}{l}
\displaystyle
\lambda_j( A+B) 
=
\inf_{{dim}(S)=j} \sup_{v\in S, |v|=1}
\langle(A+B)v, v \rangle 
 \\  \displaystyle \qquad \leq 
\inf_{{dim}(S)=j} \sup_{v\in S, |v|=1}
\langle Bv, v \rangle 
+ \displaystyle \sup_{|v|=1}\langle Av, v \rangle
\\ \displaystyle \qquad = \lambda_N (A) + \lambda_j (B)
\end{array}
$$
and the second one from 
$$
\begin{array}{l}
\displaystyle
\lambda_j( A+B) 
=
\inf_{{dim}(S)=j} \sup_{v\in S, |v|=1}
\langle(A+B)v, v \rangle \\
\displaystyle \qquad
\geq 
\inf_{{dim}(S)=j} \sup_{v\in S, |v|=1}
\langle Bv, v \rangle 
+ \inf_{|v|=1}\langle Av, v \rangle \\
\displaystyle \qquad
= \lambda_1 (A) + \lambda_j (B).
\end{array}
$$
This ends the proof.
\end{proof}

\begin{theorem} \label{teo.comportamiento.asintotico} Let $u_0$ be continuous with $u_0|_{\partial \Omega } =g$.
Let $\psi_R$ and $\varphi_R$ be the eigenfunctions associated with $\mu_R$ the first eigenvalue for 
\eqref{autovalor.positivo} and \eqref{autovalor.negativo}
in a large {strictly convex} domain $\Omega_R$ such that $\Omega \subset \subset \Omega_R$.
 Then,
there exist two positive constants {$C_1, C_2$, depending on the initial condition $u_0$,} such that
\begin{equation} \label{cotas.asimp}
z(x) + C_1 e^{-\mu_R t} \varphi_R (x) \leq u(x,t) \leq 
z(x) + C_2 e^{-\mu_R t} \psi_R (x).
\end{equation}
\end{theorem}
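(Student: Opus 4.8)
The plan is to build explicit super- and subsolutions of the evolution problem \eqref{eq.ppp} of the form $z(x) + C\,e^{-\mu_R t}\psi_R(x)$ and $z(x) + C\,e^{-\mu_R t}\varphi_R(x)$, and then conclude by the comparison principle, Lemma \ref{lemma.compar}. Here $z$ is the unique viscosity solution of the stationary problem \eqref{convex envelope equation}, and $\psi_R,\varphi_R$ are the positive/negative eigenfunctions on the enclosing strictly convex domain $\Omega_R$ associated with $\mu_R$; since $\Omega\subset\subset\Omega_R$, both $\psi_R$ and $-\varphi_R=\psi_R$ are bounded away from $0$ on $\overline{\Omega}$, which is what lets us absorb the mismatch of the initial data into the constants $C_1,C_2$.

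First I would set $\overline u(x,t) := z(x) + C_2 e^{-\mu_R t}\psi_R(x)$ and check it is a supersolution. The key algebraic input is Lemma \ref{lema.matrices}: writing $D^2\overline u = D^2 z + C_2 e^{-\mu_R t} D^2\psi_R$ and using the right-hand inequality there,
\[
\lambda_j(D^2\overline u) \leq \lambda_N\!\big(C_2 e^{-\mu_R t}D^2\psi_R\big) + \lambda_j(D^2 z) = C_2 e^{-\mu_R t}\lambda_N(D^2\psi_R) + \lambda_j(D^2 z).
\]
Since $-\lambda_N(D^2\psi_R)=\mu_R\psi_R$ and $\lambda_j(D^2 z)\le 0$ is delicate to use directly — rather one uses $\lambda_j(D^2 z)= 0$ in the viscosity sense — we get, at least formally, $\overline u_t - \lambda_j(D^2\overline u) \geq -\mu_R C_2 e^{-\mu_R t}\psi_R - \big(-\mu_R C_2 e^{-\mu_R t}\psi_R + 0\big) = 0$. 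On the boundary $\psi_R\ge 0$ so $\overline u\geq z = g$, and at $t=0$ we pick $C_2$ large enough that $C_2\psi_R \geq u_0 - z$ on $\overline\Omega$, which is possible because $\psi_R\geq c>0$ on $\overline\Omega$ and $u_0-z$ is bounded. The subsolution $\underline u(x,t) := z(x) + C_1 e^{-\mu_R t}\varphi_R(x)$ is handled symmetrically, now invoking the left-hand inequality $\lambda_j(A+B)\geq \lambda_1(A)+\lambda_j(B)$ with $A = C_1 e^{-\mu_R t}D^2\varphi_R$ and $-\lambda_1(D^2\varphi_R)=\mu_R\varphi_R$, and choosing $C_1$ so that $C_1\varphi_R \le u_0 - z$ on $\overline\Omega$. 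Lemma \ref{lemma.compar} then yields $\underline u \le u \le \overline u$, which is precisely \eqref{cotas.asimp}.

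The main obstacle is that $z$ is merely a continuous viscosity solution of \eqref{convex envelope equation}, not a $C^2$ function, so $D^2 z$ and the pointwise manipulations above are not literally available; the argument has to be run entirely in the viscosity framework. The clean way to do this is to verify directly that $\overline u$ is a viscosity supersolution of $u_t - \lambda_j(D^2 u)=0$: if $\phi$ touches $\overline u$ from below at $(x_0,t_0)\in\Omega_T$, then $\psi(x):=\phi(x,t_0)e^{\mu_R t_0}/C_2 - z(x)e^{\mu_R t_0}/C_2$ — more precisely, the shifted test function $x\mapsto \big(\phi(x,t_0)-\phi(x_0,t_0)\big) - C_2 e^{-\mu_R t_0}\big(\psi_R(x)-\psi_R(x_0)\big)$ plus $z(x_0)$ — touches $z$ from below at $x_0$, so the supersolution property of $z$ gives $\lambda_j(D^2[\cdots](x_0))\ge 0$; combining this with the eigenfunction equation for $\psi_R$ (which is classical/$C^2$, or one uses it in the viscosity sense too) and Lemma \ref{lema.matrices} applied to the Hessians of the test functions yields $\phi_t(x_0,t_0)-\lambda_j(D^2\phi(x_0,t_0))\ge 0$. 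One must also be slightly careful because $\psi_R$ solves the eigenvalue equation on $\Omega_R$, not on $\Omega$, but since $\overline\Omega\subset\Omega_R$ the equation holds on a neighborhood of $\overline\Omega$, so no difficulty arises there. Once $\overline u,\underline u$ are established as viscosity super/subsolutions with the correct ordering of boundary and initial data, the conclusion is immediate from Lemma \ref{lemma.compar}.
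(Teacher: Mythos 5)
Your proposal follows essentially the same route as the paper: the same barriers $z(x) + C_2 e^{-\mu_R t}\psi_R(x)$ and $z(x) + C_1 e^{-\mu_R t}\varphi_R(x)$, the same use of Lemma \ref{lema.matrices} combined with the eigenvalue equations on $\Omega_R$, the same choice of $C_1,C_2$ exploiting that $\psi_R$ and $-\varphi_R$ are bounded away from zero on $\overline{\Omega}\subset\subset\Omega_R$, and the conclusion via the comparison principle of Lemma \ref{lemma.compar}. Your additional remark that the computation with $D^2 z$ must be justified in the viscosity framework is a fair point of care (the paper carries out this step formally), but it does not alter the structure of the argument.
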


\begin{proof}
We just observe that 
$\underline{u}(x,t)=z(x) + C_1 e^{-\mu_R t} \varphi_R (x)$ with $C_1$ large enough is a subsolution 
to our evolution problem in $\Omega$. In fact, we have
$$
\underline{u}_t (x,t) = -\mu_R C_1 e^{-\mu t} \varphi_R (x)
$$
and
$$
\begin{array}{l}
\displaystyle
\lambda_1 (D^2 \underline{u} (x,t) )= \lambda_1 (D^2 z(x) + C_1 e^{-\mu_R t} D^2 \varphi_R (x))
\\[8pt]
\displaystyle \qquad \geq \lambda_1 (D^2 z(x)) + C_1 e^{-\mu_R t}  \lambda_1(D^2 \varphi_R (x))
= - \mu C_1 e^{-\mu_R t}   \varphi_R (x).
\end{array}
$$
An analogous computation shows that $\overline{u} (x,t) = z(x) + C_2 e^{-\mu_R t} \psi_R (x)$ is a supersolution.

In addition, we have that
$$\underline{u} (x,t) \leq g(x) \leq \overline{u} (x,t), \qquad x\in \partial \Omega, t>0, $$
and for $C_1$, $C_2$ large enough (depending on $u_0$)
$$
\underline{u} (x,0) = z(x) + C_1 \varphi_R (x) \leq u_0(x) \leq \overline{u} (x,0) =
z(x) + C_2 \psi_R (x), \qquad x\in  \Omega. 
$$  
Notice that here we are using that $\varphi_R$ and $\psi_R$ are strictly negative and strictly
positive respectively inside $\Omega_R$.

Finally we apply the comparison principle in $\Omega$ to obtain the desired conclusion
$$
z(x) + C_1 e^{-\mu_R t} \varphi_R (x) \leq u(x,t) \leq 
z(x) + C_2 e^{-\mu_R t} \psi_R (x).
$$
\end{proof}

As an immediate consequence of this result we obtain that solutions to our
evolution problem converge uniformly to the convex envelope of the boundary condition. This proves Theorem \ref{teo.comp.asymp.intro}.

Notice that in the previous result $\mu$ is the first eigenvalue for $-\lambda_N(D^2u)$ in the {\it larger} domain $\Omega_R$.
Now, our aim is to obtain a sharper bound (involving $\mu_1$ the first eigenvalue in $\Omega$).
To this aim we have to assume that $u_0$ is $C^1(\overline{\Omega})$ with $u_0|_{\partial \Omega } =g$
and that the solution $z$ of \eqref{convex envelope equation} is $C^1(\overline{\Omega})$. This regularity of 
the solution of \eqref{convex envelope equation}
up to the boundary is not included in \cite{OS} (there only interior regularity for the convex envelope is shown). Under these 
hypotheses on $u_0$ and $z$ the
difference $u_0-z$ is $C^1(\overline{\Omega})$ and vanishes on $\partial \Omega$.
Notice that we do not know if there is a regularizing effect for our evolution problem. That is, we do not know if
for a smooth boundary datum and a continuous initial condition the solution is smooth in $\overline{\Omega}$ for any positive
time $t$ (as happens with solutions to the heat equation). 

As a previous step in our arguments, we need to show that the eigenfunctions have a "negative normal derivative". 
Notice that the existence of such eigenfunction is proved in \cite{Birin2} for strictly convex domains.
Although this hypothesis is sufficient but not necessary
(see \cite{BirindelliIshii} for construction of eigenfunctions in rectangles),
we shall assume it here since the optimal hypotheses for existence of eigenfunctions are unknown (as far as we know).
In the next two results we need to assume that the domain $\Omega$ has some extra regularity (it has an interior tangent ball at every 
boundary point).

\begin{lemma} \label{lemma.normal.derivative} Assume that $\Omega$ is strictly convex and has an interior tangent ball at every point of its boundary.
Let $\varphi_1$ and $\psi_1$ be the eigenfunctions associated with $\mu_1$ the first eigenvalue for 
\eqref{autovalor.positivo} and \eqref{autovalor.negativo}
in $\Omega$. Assume that they are normalized with $\|\psi\|_\infty = \|\varphi\|_\infty =1$.
Then, there exists $C>0$ such that
$$
\psi_1 (x) \geq C \dist (x, \partial \Omega) \qquad \mbox{and} \qquad 
\varphi_1 (x) \leq - C \dist (x, \partial \Omega),
$$
for $x\in \Omega$.
\end{lemma}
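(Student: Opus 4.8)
The plan is to observe that, because $\mu_1>0$ and $\psi_1>0$ inside $\Omega$, the eigenfunction $\psi_1$ is automatically a \emph{concave} function on $\overline\Omega$, and then to read the linear lower bound off a one‑line convex‑geometry computation. Since $\varphi_1=-\psi_1$, it suffices to prove $\psi_1(x)\ge C\,\dist(x,\partial\Omega)$ for $x\in\Omega$; the estimate will in fact hold with $C=1/\operatorname{diam}(\Omega)$.

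\textbf{Step 1: $\psi_1$ is concave on $\overline\Omega$.} From $-\lambda_N(D^2\psi_1)=\mu_1\psi_1$ and $\mu_1\psi_1\ge0$ in $\Omega$, the function $\psi_1$ is a viscosity supersolution of $\lambda_N(D^2 u)=0$: if $\phi\in C^2$ touches $\psi_1$ from below at $x_0\in\Omega$, then $-\lambda_N(D^2\phi(x_0))\ge\mu_1\psi_1(x_0)\ge0$, i.e. $\lambda_N(D^2\phi(x_0))\le0$. Viscosity supersolutions of $\lambda_N(D^2 u)=0$ on a convex domain are concave; this is the dual, via $u\mapsto -u$ and $\lambda_1(-X)=-\lambda_N(X)$, of the characterization of viscosity subsolutions of $\lambda_1(D^2 u)=0$ as convex functions (see \cite{OS,Ober}). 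If one prefers a self‑contained argument, a failure of concavity produces an interior point at which $\psi_1$ can be touched from below by a function of the form $\ell+K\,\dist(\cdot,L)^2$, with $\ell$ affine, $L$ a line and $K$ large, whose Hessian has a large positive eigenvalue, contradicting $\lambda_N(D^2\phi(x_0))\le0$. Hence $\psi_1$ is concave in $\Omega$ and, being continuous up to $\partial\Omega$, concave on $\overline\Omega$ (and $\varphi_1=-\psi_1$ is convex).

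\textbf{Step 2: the linear bound.} Set $D=\operatorname{diam}(\Omega)$ and pick $x^\ast\in\Omega$ with $\psi_1(x^\ast)=\|\psi_1\|_\infty=1$ (the maximum is attained by continuity and compactness, and in $\Omega$ rather than on $\partial\Omega$, where $\psi_1\equiv 0$). Fix $x\in\Omega\setminus\{x^\ast\}$. The ray issuing from $x^\ast$ through $x$ leaves the bounded convex set $\Omega$ through a single boundary point $y$, and $x$ lies on the open segment $(x^\ast,y)$, so $x=(1-s)x^\ast+s\,y$ for some $s\in(0,1)$. Concavity of $\psi_1$ gives $\psi_1(x)\ge(1-s)\psi_1(x^\ast)+s\,\psi_1(y)=1-s$. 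Since $x-y=(1-s)(x^\ast-y)$, we get $\dist(x,\partial\Omega)\le|x-y|=(1-s)\,|x^\ast-y|\le(1-s)D$, hence $1-s\ge\dist(x,\partial\Omega)/D$. Combining, $\psi_1(x)\ge\dist(x,\partial\Omega)/D$ for every $x\in\Omega$, and therefore $\varphi_1(x)=-\psi_1(x)\le-\dist(x,\partial\Omega)/D$, so the lemma holds with $C=1/D$.

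\textbf{Main obstacle and remarks.} The only nontrivial point is Step 1, where the structure of the operator $\lambda_N$ is used; Step 2 is elementary. Note that this argument uses only the convexity of $\Omega$: the interior tangent ball hypothesis in the statement — which ensures, for $x$ near $\partial\Omega$, that $x=\bar x+\dist(x,\partial\Omega)\,\nu$ with $\bar x$ the nearest boundary point and $\nu$ the inner normal — is not needed here, although it will be used in the subsequent results. Alternatively, one can argue by a Hopf‑type barrier: at each $\bar x\in\partial\Omega$ take the interior tangent ball $B_r(z_0)\subseteq\Omega$ and compare $\psi_1$, which is a supersolution of $\lambda_N(D^2 u)=0$, from below in the annulus $B_r(z_0)\setminus\overline{B_{r/2}(z_0)}$ with the radial profile $w(z)=c\,(r-|z-z_0|)$, which is an exact solution of $\lambda_N(D^2 w)=0$ there and vanishes on $\partial B_r(z_0)$; the uniform inner radius $r$ then gives the estimate with constants independent of $\bar x$.
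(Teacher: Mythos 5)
Your proof is correct, but it takes a genuinely different route from the paper's. The paper argues locally near each boundary point: it places an interior tangent ball $B_r(y)$, uses the strict positivity of $\psi_1$ on $B_{r/2}(y)$ to get $\mu_1\psi_1\geq c$ there, and compares $\psi_1$ with the explicitly computed radial solution of $-\lambda_N(D^2a)=c\,\chi_{B_{r/2}(y)}$ in $B_r(y)$ (linear in the outer annulus, quadratic inside) — essentially the Hopf-type barrier you sketch in your closing remark. Your main argument is instead global: from $-\lambda_N(D^2\psi_1)=\mu_1\psi_1\geq 0$ you deduce that $\psi_1$ is a viscosity supersolution of $\lambda_N(D^2u)=0$, invoke the known characterization (dual to the convexity criterion $\lambda_1(D^2u)\geq 0$ of \cite{OS,Ober}) to conclude that $\psi_1$ is concave on $\overline{\Omega}$, and then read off the linear lower bound from concavity, the interior maximum and the zero boundary values. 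What your approach buys is an explicit and cleaner constant, $C=1/\operatorname{diam}(\Omega)$, independent of the inner ball radius, and the observation that the interior tangent ball hypothesis is not actually needed for this lemma (only convexity is used, plus strict convexity to guarantee existence of the eigenfunction); the price is that the whole weight of the proof rests on the viscosity characterization of concavity, which is a nontrivial imported result (your self-contained sketch of it would need the standard, slightly fussy, construction of a full-dimensional test function off the segment to be complete). The paper's barrier argument is more self-contained and localizes at the boundary, which is why it carries the interior-ball hypothesis. Both proofs are valid; do make sure the concavity characterization is quoted with the correct sign convention (supersolutions of $\lambda_N(D^2u)=0$, i.e. test functions touching from below, as you have it), and note that concavity extends from $\Omega$ to $\overline{\Omega}$ by continuity, which you correctly use when evaluating at the boundary point $y$.
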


\begin{proof}
Take $x_0\in \partial \Omega$. Let $B_r (y)$ be a ball inside $\Omega$, tangent to 
$\partial \Omega$ at $x_0$.  In $B_{r/2} (y)$ the eigenfunction $\psi_1$ is strictly positive
and then we obtain that there exists a constant $c$ such that
$$
\mu_1 \psi_1(x) \geq c, \qquad x \in B_{r/2} (y).
$$
Now, we take $a(x)$ the solution to
\begin{equation} \label{eq.a}
\left\lbrace\begin{array}{ll}
- \lambda_N (D^2 a(x)) = c \chi_{B_{r/2} (y)}(x), & \text{in } B_{r} (y), \\
a(x) = 0, & \text{on } \partial B_{r} (y).
\end{array}\right.
\end{equation}
This function $a$ is radial $a(x) = a(|x-y|)$ and can be explicitly computed. In fact,
$$
a(x) =\left\lbrace\begin{array}{ll}
c_1 (r-|x-y|) , & \text{in } B_{r} (y) \setminus B_{r/2} (y), \\[8pt]
\displaystyle c_2 - \frac{c}{2} |x-y|^2, & \text{in } B_{r/2} (y)
\end{array}\right.
$$
with $c_1$, $c_2$ such that $c_1 = c r/2$ (continuity of the derivative
at $r/2$) and $c_1 r/2 = c_2 - c/2 (r/2)^2$ (continuity of the function at $r/2$).

To conclude we use the comparison argument for \eqref{eq.a} to obtain that
$$
a(x) \leq \psi_1 (x) \qquad x\in B_{r} (y).
$$
This implies that
$$
\psi_1 (x) \geq C \dist (x, \partial \Omega).
$$
A similar argument shows that
$$
\varphi_1 (x) \leq - C \dist (x, \partial \Omega).
$$
\end{proof}

\begin{theorem} \label{teo.comportamiento.asintotico.mu1} 
Assume that $\Omega$ is strictly convex and has an interior tangent ball at every point of its boundary.
Let $g$ be such that the solution $z$ of \eqref{convex envelope equation} is $C^{1} (\overline{\Omega})$ and let $u_0$ be 
$C^1(\overline{\Omega})$ with $u_0|_{\partial \Omega } =g$ and let $\mu_1$ the first eigenvalue for 
\eqref{autovalor.positivo} and \eqref{autovalor.negativo}
in $\Omega$. Then,
there exist two positive constants (depending on the initial condition $u_0$) such that
\begin{equation} \label{cotas.asimp.22}
z(x) + C_1 e^{-\mu_1 t} \varphi_1 (x) \leq u(x,t) \leq 
z(x) + C_2 e^{-\mu_1 t} \psi_1 (x).
\end{equation}
\end{theorem}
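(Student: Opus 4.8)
The plan is to mimic the proof of Theorem~\ref{teo.comportamiento.asintotico} but to use the eigenfunctions $\varphi_1,\psi_1$ in $\Omega$ itself (rather than in a larger domain $\Omega_R$), which requires controlling the difference $u_0-z$ near $\partial\Omega$. First I would set $\underline{u}(x,t)=z(x)+C_1e^{-\mu_1 t}\varphi_1(x)$ and check, exactly as before, that it is a subsolution of the evolution equation: the time derivative is $-\mu_1 C_1 e^{-\mu_1 t}\varphi_1(x)$, and by Lemma~\ref{lema.matrices} (the case $\lambda_1(A)+\lambda_1(B)\le\lambda_1(A+B)$ when $j=1$, and more generally the left inequality of \eqref{eq.matrices} together with the fact that $\varphi_1$ is concave so $\lambda_1(D^2\varphi_1)=\lambda_j(D^2\varphi_1)$ suffices if one is careful about the index) we get $\lambda_j(D^2\underline{u})\ge\lambda_j(D^2 z)+C_1e^{-\mu_1 t}\lambda_1(D^2\varphi_1)=-\mu_1 C_1 e^{-\mu_1 t}\varphi_1(x)$, so $\underline{u}_t-\lambda_j(D^2\underline{u})\le 0$. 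Since $\varphi_1<0$ in $\Omega$ and vanishes on $\partial\Omega$ we also get $\underline{u}(x,t)\le z(x)=g(x)$ on $\partial\Omega\times(0,\infty)$. The analogous computation with $\overline{u}(x,t)=z(x)+C_2e^{-\mu_1 t}\psi_1(x)$ gives a supersolution with $\overline{u}\ge g$ on the lateral boundary, using now the right inequality $\lambda_j(A+B)\le\lambda_N(A)+\lambda_j(B)$ and $\lambda_N(D^2\psi_1)$.

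The crux — and the reason the extra hypotheses are imposed — is matching the initial data: I must find $C_1,C_2>0$ with
\[
z(x)+C_1\varphi_1(x)\le u_0(x)\le z(x)+C_2\psi_1(x),\qquad x\in\overline{\Omega}.
\]
Here is where Lemma~\ref{lemma.normal.derivative} enters. Since $u_0-z$ is $C^1(\overline{\Omega})$ and vanishes on $\partial\Omega$, there is a constant $L$ (a Lipschitz/gradient bound) with $|u_0(x)-z(x)|\le L\,\dist(x,\partial\Omega)$ for all $x\in\overline{\Omega}$. On the other hand Lemma~\ref{lemma.normal.derivative} gives $\psi_1(x)\ge C\dist(x,\partial\Omega)$ and $\varphi_1(x)\le -C\dist(x,\partial\Omega)$. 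Combining, $u_0(x)-z(x)\le L\dist(x,\partial\Omega)\le (L/C)\psi_1(x)$ and $u_0(x)-z(x)\ge -L\dist(x,\partial\Omega)\ge (L/C)\varphi_1(x)$, so $C_1=C_2=L/C$ works. Note that without the interior-tangent-ball geometry and the $C^1$ regularity of $z$ one only controls $u_0-z$ by the modulus of continuity, which does not dominate a function that decays linearly in $\dist(\cdot,\partial\Omega)$, so the sharper constant $\mu_1$ could not be extracted — that trade-off is exactly why this theorem needs stronger hypotheses than Theorem~\ref{teo.comportamiento.asintotico}.

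Finally I would invoke the comparison principle, Lemma~\ref{lemma.compar}: $\underline{u}$ is a subsolution lying below $u_0$ at $t=0$ and below $g$ on $\partial\Omega$, $\overline{u}$ is a supersolution lying above them, and $u$ solves the problem, so $\underline{u}(x,t)\le u(x,t)\le\overline{u}(x,t)$ on $\Omega\times(0,\infty)$, which is precisely \eqref{cotas.asimp.22}. I expect the main obstacle to be the sign/index bookkeeping in applying Lemma~\ref{lema.matrices} for general $j$: one needs that adding the concave perturbation $C_1e^{-\mu_1t}\varphi_1$ decreases $\lambda_j$ by no more than $-C_1e^{-\mu_1t}\lambda_1(D^2\varphi_1)$, and the clean way to see this is the left inequality $\lambda_1(D^2\varphi_1)+\lambda_j(D^2 z)\le\lambda_j(D^2 z+C_1e^{-\mu_1t}D^2\varphi_1)$ after pulling the positive scalar out; dually for the upper barrier one uses $\lambda_j(D^2 z+C_2e^{-\mu_1t}D^2\psi_1)\le\lambda_N(C_2e^{-\mu_1t}D^2\psi_1)+\lambda_j(D^2 z)$. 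All of this must be phrased in the viscosity sense (testing $\underline{u},\overline{u}$ against smooth functions and using that $z$ is itself a viscosity solution of $\lambda_j(D^2 z)=0$), but since $\varphi_1,\psi_1$ are genuine eigenfunctions and $z$ enters only through test functions, this is the same routine argument as in the proof of Theorem~\ref{teo.comportamiento.asintotico}.
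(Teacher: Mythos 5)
Your proposal is correct and follows essentially the same route as the paper: reuse the sub/supersolution barriers $z+C_1e^{-\mu_1 t}\varphi_1$ and $z+C_2e^{-\mu_1 t}\psi_1$ from Theorem~\ref{teo.comportamiento.asintotico}, and secure the initial-time ordering by combining the bound $|u_0-z|\leq L\,\dist(x,\partial\Omega)$ (from $C^1$ regularity and the vanishing of $u_0-z$ on $\partial\Omega$) with the lower bounds of Lemma~\ref{lemma.normal.derivative}. The only cosmetic difference is that you spell out the application of Lemma~\ref{lema.matrices} and the viscosity-sense caveat more explicitly than the paper does; the aside about $\varphi_1$ being concave is unnecessary, since the left inequality of \eqref{eq.matrices} already suffices.
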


\begin{proof}
We just observe that the arguments used in the proof of Theorem \ref{teo.comportamiento.asintotico}
also work here since we can find two constants $C_1$ and $C_2$ such that
\begin{equation} \label{desig.inic}
 z(x) + C_1 \varphi_1 (x) \leq u_0(x) \leq 
z(x) + C_2 \psi_1 (x), \qquad x\in  \Omega. 
\end{equation} 
Here we are using that $u_0 - z$ is $C^1 (\overline{\Omega})$ with $(u_0 - z)|_{\partial \Omega } =0$
to get that there is a constant $C$ such that
$$
-C \dist (x, \partial \Omega) \leq (u_0 - z)(x) \leq C \dist (x, \partial \Omega),
$$
and observe that from our previous Lemma \ref{lemma.normal.derivative} we obtain 
\eqref{desig.inic}.
\end{proof}

We next give the proof of Theorem \ref{teo.comp.cur.intro},
which is a refined description of the asymptotic behavior of the solution to \eqref{convex envelope evolution}
when the boundary datum $g$ comes from the restriction of an affine function to $\partial\Omega$.
For instance, if we consider the case $j=1$, it shows that there exists a finite time $T>0$ beyond which 
the upper estimate in \eqref{cotas.asimp} can be reduced to $z(x)$, the $\lambda_j-$envelope of $g$ inside $\Omega$.

\begin{proof}[Proof of Theorem \ref{teo.comp.cur.intro}]
We assume that there is an affine function (a plane if we are in the case $N=2$) $\pi$ such that $g=\pi |_{\partial \Omega}$.
In this case the $\lambda_j-$envelope $z$ of $g$ inside $\Omega$ is given by 
$$
z(x) = \pi (x).
$$

Hence, let us consider 
$$
\hat{u} (x,t) = u(x,t) - z(x) = u(x,t) - \pi(x).
$$
This function $\hat{u}$ is the viscosity solution to
\begin{equation}\label{hat.u.eq}
\left\lbrace\begin{array}{ll}
\hat{u}_t - \lambda_j(D^2 \hat{u}) = 0, & \text{in } \Omega\times (0,+\infty), \\
\hat{u} = 0, & \text{on } \partial \Omega \times (0,+\infty), \\
\hat{u}(x,0) = u_0 (x) - z(x), & \text{in } \Omega.
\end{array}\right.
\end{equation}

For $1\leq j\leq N-1$, we consider a large ball $B_R$ with 
$\Omega \subset B_R$. Inside this ball we take
$$
w (x,t) =  e^{R^2 \mu} e^{-\mu t} e^{-\mu \frac{r^2}{2}}.
$$
For large $\mu$, this function $w$ verifies
\begin{equation}\label{w.super.eq}
\left\lbrace\begin{array}{ll}
\displaystyle w_t - \lambda_1(D^2 w) = -\mu e^{R^2 \mu} e^{-\mu t} e^{-\mu \frac{r^2}{2}} + \mu e^{R^2 \mu} e^{-\mu t} e^{-\mu \frac{r^2}{2}}= 0, & \text{in } \Omega\times (0,+\infty), \\
w > 0, & \text{on } \partial \Omega \times (0,+\infty), \\
\displaystyle  w(x,0) =  e^{R^2 \mu} e^{-\mu \frac{r^2}{2}} \geq u_0 (x) - z(x), & \text{in } \Omega.
\end{array}\right.
\end{equation}
Hence, $w$ is a supersolution to \eqref{hat.u.eq} and then, by the comparison principle, we get
$$
\hat{u} (x,t) \leq w(x,t) = e^{R^2 \mu} e^{-\mu t} e^{-\mu \frac{r^2}{2}},
$$
for every $\mu$ large enough.
Then, for every $t> T = R^2/2$ we get
$$
\hat{u} (x,t) \leq \lim_{\mu \to \infty } e^{R^2 \mu} e^{-\mu t} e^{-\mu \frac{r^2}{2}} =0. 
$$

Hence, we have shown that when the boundary condition is the restriction of an affine function to the boundary, then there exists a finite time $T$ such that the solution to the evolution
problem lies below the stationary solution $z$, regardless the initial condition $u_0$, that is, it holds that
$$
u(x,t) \leq z(x)
$$
for every $x\in \Omega$ and every $t<T$ for $1\leq j \leq N-1$.

For $2\leq j \leq N$ the same argument proves that 
there exists a finite time $T$ such that the solution to the evolution
problem lies above the stationary solution.
Hence for $2\leq j\leq N-1$ there exists a finite time $T$ such that the solution to the evolution
problem coincides with the stationary solution.
This proves Theorem \ref{teo.comp.cur.intro}.
\end{proof}

Observe that for $j=1$, $u(x,t)=e^{-\mu_1t}\varphi_1(x)$ is a solution to the problem that do not become zero in finite time.
The same holds for $u(x,t)=e^{-\mu_1t}\psi_1(x)$ for $j=N$. Our next result shows that, in general, we can not expect
that all solutions lie below $z$ in the whole $\overline{\Omega}$ in finite time.

\begin{theorem}\label{teo.comportamiento.no.cur}
Let $\Omega$ be an open bounded domain in $\mathbb{R}^N$, and let $1\leq j\leq N$.
For any $x_0\in \Omega$, there exist $g$ and $u_0$ continuous in $\partial\Omega$ and $\overline{\Omega}$ respectively,
with $u_0|_{\partial\Omega} =g$, such that the solution of problem \eqref{convex envelope evolution} satisfies
$$
u (x_0,t) \geq z(x_0) + k e^{-\mu_1 t}, \qquad \text{for all} \ t>0, 
$$
where $\mu_1,k >0$ are two constants and $z$ is the solution of \eqref{convex envelope equation}.
\end{theorem}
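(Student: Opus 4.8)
The plan is to show that for each $x_0\in\Omega$ one can choose $g$ so that the stationary solution $z$ of \eqref{convex envelope equation} vanishes at $x_0$ while still being ``large'' near $x_0$ in directions complementary to an affine $j$-plane through $x_0$; simultaneously one builds a subsolution $\underline u$ of the evolution problem with $\underline u(x_0,t)=ke^{-\mu_1 t}$ that stays below $g$ on the parabolic boundary, and the assertion follows from the comparison principle, Lemma~\ref{lemma.compar}. For the building blocks, put $d_0:=\dist(x_0,\partial\Omega)>0$, $\rho:=d_0/2$, $\mu_1:=\pi^2/(4\rho^2)$, fix a $j$-dimensional subspace $V\subset\R^N$, set $W:=V^\perp$, and let $P_V$, $P_W$ be the orthogonal projections (identify $V\cong\R^j$). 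A direct computation shows that $\psi(x'):=\cos\!\big(\tfrac{\pi|x'|}{2\rho}\big)$ is a \emph{smooth}, strictly concave, positive solution of $-\lambda_j(D^2\psi)=\mu_1\psi$ on $B_\rho\subset\R^j$ with $\psi=0$ on $\partial B_\rho$: the eigenvalues of $D^2\psi$ at radius $r$ are $\psi''(r)$ and $\psi'(r)/r$, and $\psi''(r)\ge \psi'(r)/r$ is equivalent to $\tan\theta\ge\theta$ with $\theta=\tfrac{\pi r}{2\rho}\in(0,\tfrac\pi2)$, so $\lambda_{\max}(D^2\psi)=\psi''(r)=-\mu_1\psi$ (smoothness across the origin holds because $s\mapsto\cos(\tfrac{\pi}{2\rho}\sqrt s)$ is real analytic). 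On $\overline\Omega$ set
\[
\psi_\#(x):=\cos\!\Big(\tfrac{\pi|P_V(x-x_0)|}{2\rho}\Big)\ \text{if }|P_V(x-x_0)|\le\rho,\quad \psi_\#(x):=0\ \text{otherwise},
\]
let $G(x):=|P_W(x-x_0)|$, $\beta:=\sqrt{d_0^2-\rho^2}>0$, fix a constant $0<c\le\beta$, and take
\[
g:=G|_{\partial\Omega},\qquad u_0:=\max\{G,\,c\,\psi_\#\}\ \text{on }\overline\Omega,\qquad \underline u(x,t):=c\,e^{-\mu_1 t}\,\psi_\#(x).
\]

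\emph{Identification of the stationary solution.} I claim $z=G|_{\overline\Omega}$, so in particular $z(x_0)=|P_W(0)|=0$. The function $G$ is convex (a norm of a linear map), and off the $j$-plane $x_0+V$ it is smooth with $D^2G$ vanishing on $V$, so its $j$ smallest eigenvalues are $0$ and $\lambda_j(D^2G)=0$ there. If $\phi\in C^2$ touches $G$ from above at $\bar x\in x_0+V$, then $\phi\ge G\ge0=\phi(\bar x)$, so $D^2\phi(\bar x)\ge0$ and $\lambda_j(D^2\phi(\bar x))\ge0$; if $\phi$ touches $G$ from below there, then $\phi\le G\equiv0$ on $x_0+V$ near $\bar x$ with $\phi(\bar x)=0$, hence $\langle D^2\phi(\bar x)v,v\rangle\le0$ for all $v\in V$ and therefore $\lambda_j(D^2\phi(\bar x))\le\sup_{v\in V}\langle D^2\phi(\bar x)v,v\rangle\le0$. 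Thus $G$ is a viscosity solution of $\lambda_j(D^2G)=0$ in $\R^N$, and by the comparison principle for \eqref{convex envelope equation} (cf.\ \cite{HL1}) it coincides on $\overline\Omega$ with $z$.

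\emph{The subsolution and the conclusion.} On $\Omega\cap\{|P_V(x-x_0)|<\rho\}$ the function $\underline u$ is $C^\infty$; in coordinates adapted to $V\oplus W$ one has $D^2_x\underline u=c\,e^{-\mu_1 t}\operatorname{diag}(D^2\psi,0_{N-j})$ with $D^2\psi$ negative definite, so the $j$-th eigenvalue of $D^2_x\underline u$ equals $c\,e^{-\mu_1 t}\lambda_{\max}(D^2\psi)=-\mu_1\underline u=\partial_t\underline u$, i.e.\ $\underline u$ is a classical solution there; on $\Omega\cap\{|P_V(x-x_0)|>\rho\}$, $\underline u\equiv0$; and on the interface $\{|P_V(x-x_0)|=\rho\}$ no $C^2$ function touches $\underline u$ from above, since along the inward radial segment $s\mapsto\bar x+s\nu$ with $\nu:=P_V(\bar x-x_0)/\rho$ the function $\underline u(\cdot,\bar t)$ equals $0$ for $s\ge0$ and behaves like $\gamma|s|$ for $s<0$ with $\gamma=c\,e^{-\mu_1\bar t}\tfrac{\pi}{2\rho}>0$ — a convex corner, untouchable from above by a $C^1$ function — so the subsolution condition is vacuous there. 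For the boundary data: if $x\in\partial\Omega$ and $\psi_\#(x)>0$ then $|P_V(x-x_0)|<\rho$, so $G(x)^2=|x-x_0|^2-|P_V(x-x_0)|^2\ge d_0^2-\rho^2=\beta^2$, whence $c\,\psi_\#(x)\le c\le\beta\le G(x)=g(x)$ (and trivially if $\psi_\#(x)=0$); also $\underline u(\cdot,0)=c\psi_\#\le u_0$ and $u_0|_{\partial\Omega}=g$. Thus $\underline u$ is a subsolution in the sense of \eqref{ec.subsol}, and Lemma~\ref{lemma.compar} yields $u\ge\underline u$ in $\Omega\times(0,\infty)$, so for every $t>0$
\[
u(x_0,t)\ \ge\ \underline u(x_0,t)\ =\ c\,e^{-\mu_1 t}\,\psi_\#(x_0)\ =\ c\,e^{-\mu_1 t}\ =\ z(x_0)+k\,e^{-\mu_1 t},\qquad k:=c>0 .
\]

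\emph{Where the difficulty lies.} Once the building blocks are in place nothing is hard; the only delicate points are the two viscosity verifications — that $G$ solves the stationary equation across the kink $x_0+V$ and that $\underline u$ is a subsolution across the interface $\{|P_V(x-x_0)|=\rho\}$ — and both are handled by the elementary convex-corner / one–sided-derivative argument rather than by any regularity theory. The structural fact that makes the interior computation trivial, and lets us bypass existence and regularity of eigenfunctions of the $j$-th eigenvalue operator, is that the explicit profile $\cos(\tfrac{\pi}{2\rho}|x'|)$ is a smooth eigenfunction of $-\lambda_j$ in \emph{every} dimension $j$ (the check reducing to $\tan\theta\ge\theta$).
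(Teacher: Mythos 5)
Your proof is correct, and its skeleton is the same as the paper's: compare $u$ with a separated subsolution $e^{-\mu_1 t}\psi(x')$ built from an eigenfunction of $-\lambda_j$ on a $j$-dimensional ball, after choosing boundary data that force the stationary solution to vanish at $x_0$. Where you genuinely diverge is in making every ingredient explicit and self-contained. First, instead of invoking \cite{BirindelliIshii} for the existence of the principal eigenfunction on $B^j_\rho$, you exhibit it as $\cos(\pi|x'|/2\rho)$ with eigenvalue $\pi^2/4\rho^2$, the verification reducing to $\tan\theta\ge\theta$; this removes a nontrivial citation and gives an explicit rate. Second, instead of the paper's choice of a nonnegative $g$ vanishing on $\partial\Omega\cap\{x''=0\}$ and the slice/concave-envelope argument from \cite{BlancRossi} (plus the maximum principle) to get $z(x_0)=0$, you take $g=|P_W(x-x_0)|$ and check directly, via the convex-corner test along the plane $x_0+V$, that this function is itself a viscosity solution of $\lambda_j(D^2 z)=0$, so that $z=G$ by elliptic comparison; this buys the identification of $z$ on all of $\overline\Omega$, not just on the slice. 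Third, the paper applies comparison only on the cylinder $\mathcal{Q}=(\Omega\cap\{x'\in B^j_r\})\times[0,\infty)$, which tacitly requires knowing $u\ge 0$ on the lateral boundary of $\mathcal{Q}$ (from the maximum principle, since $g,u_0\ge0$); you instead extend the subsolution by zero to all of $\Omega$ and verify the viscosity subsolution property across the interface $\{|P_V(x-x_0)|=\rho\}$ by observing that the convex corner there cannot be touched from above, so that Lemma~\ref{lemma.compar} applies verbatim with the global sub/supersolution pair \eqref{ec.subsol}. All the delicate steps (the ordering $\psi''(r)\ge\psi'(r)/r$ of the radial Hessian eigenvalues, the computation $\lambda_j(\operatorname{diag}(D^2\psi,0_{N-j}))=\lambda_{\max}(D^2\psi)$, the boundary inequality $c\,\psi_\#\le \sqrt{d_0^2-\rho^2}\le G$ on $\partial\Omega$, and the degenerate case $j=N$ where $G\equiv 0$) check out, so I see no gap; your version is arguably preferable in that it needs neither the existence theory for eigenfunctions nor the geometric characterization of $z$ from \cite{BlancRossi}.
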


We can obtain the analogous result for the inequality
$$
u (x_0,t) \leq z(x_0) - k e^{-\mu_1 t}.
$$

\begin{proof}
Consider, without loss of generality, that $x_0\in\Omega$ is the origin.
Take $r>0$ small enough such that the ball $B_r$ of radius $r$ and center at the origin satisfies $B_r\subset\subset \Omega$.

In the rest of the proof we will denote $\mathbb{R}^N = \mathbb{R}^j\times \mathbb{R}^{N-j}$, 
and we will write any point in $\mathbb{R}^N$ as $x= (x',x'') \in \mathbb{R}^j\times \mathbb{R}^{N-j}$.

Consider $B_r^j = B_r\cap \{x'' = 0\}$.
We observe that $B_r^j$ is a $j-$dimensional ball.
Therefore, as it is proven in \cite{BirindelliIshii},
there exists a positive eigenvalue $\mu_1$, with an eigenfunction $\psi_1$ which is continuous up to the boundary, such that
\begin{equation*}
\left\{ \begin{array}{ll}
-\lambda_j ( D^2\psi_1) = \mu_1\psi_1 & \text{in} \ B_r^j, \\
\psi_1 = 0 & \text{on} \ \partial B_r^j, \\
\psi_1 > 0  & \text{in} \ B_r^j.
\end{array}\right.
\end{equation*}

Consider $g$ a nonnegative continuous function defined on $\partial \Omega$ such that
\begin{equation}\label{g bigger than psi}
g(x',x'') \geq \psi_1(x'), \quad \text{for all} \ (x',x'')\in \partial\Omega,\ \text{with} \ x'\in B_r^j,
\end{equation}
and
\begin{equation}\label{g=0 in x''=0}
g(x',0) = 0 \quad \text{for all} \ (x',0) \in \partial\Omega \cap \{ x''=0\}.
\end{equation}
We note that this choice of $g$ is always possible since, if $x'\in B_r^j$, then $(x',0)\in B_r$, and since have considered 
$B_r\subset\subset \Omega$, we deduce $(x',0) \not\in \partial\Omega$.

For this choice of $g$, we claim that the solution of problem \eqref{convex envelope equation} satisfies
$$
z(x',x'') = 0, \qquad \text{in} \quad \Omega\cap \{x''=0\}.
$$
In order to prove this claim, we use the geometric interpretation of solutions to problem \eqref{convex envelope equation} given in \cite{BlancRossi}. 
Consider the $j-$dimensional subspace $\{x''=0\}$, and the $j-$dimensional domain $D:=\Omega\cap \{x''=0\}$. 
Following the ideas of \cite{BlancRossi}, 
the solution $z$ of \eqref{convex envelope equation} must satisfy
$$
z\leq z_D, \qquad \text{in} \quad D,
$$
where $z_D$ is the concave envelope of $g$ in $D=\Omega\cap \{x''=0\}$.
By the choice of $g$, using \eqref{g=0 in x''=0}, it follows that $z_D\equiv 0$.
The claim then follows from the maximum principle, since $g\geq 0$ in $\partial \Omega$.
In particular, we have
$$
z(0) = 0.
$$

Now, take $u_0$ a nonnegative continuous function in $\overline{\Omega}$ satisfying $u_0|_{\partial\Omega} = g$ and
\begin{equation}\label{u_0 bigger than psi_1}
u_0 (x',x'') \geq \psi_1 (x') \quad \text{for all} \ (x',x'')\in \Omega,\ \text{with} \ x'\in B_r^j.
\end{equation}

Consider the following function defined in the subdomain $\mathcal{Q} := \Big( \Omega \cap \{ x'\in B_r^j\} \Big) \times [0,+\infty)$:
$$
\underline{u} (x',x'',t) := \psi_1 (x') e^{-\mu_1 t}.
$$
We have
\begin{equation*}
\begin{array}{ll}
\underline{u}_t (x',x'',t) &= -\mu_1 \psi_1 (x') e^{-\mu_1 t}, \\
 \noalign{\vskip 1mm}
\lambda_j (D^2 \underline{u} (x',x'',t)) &= -\mu_1 \psi_1 (x') e^{-\mu_1 t}, \\
 \noalign{\vskip 1mm}
\underline{u} (x',x'',t) &\leq \psi_1 (x'),
\end{array}
\end{equation*}
in $\mathcal{Q}$. By \eqref{g bigger than psi} and \eqref{u_0 bigger than psi_1},
together with the comparison principle, we get
$$
\underline{u} (x',x'',t)\leq u(x',x'',t), \qquad \text{in} \ \mathcal{Q},
$$
and since $z(0) = 0$, we have 
$$
u(0,t) \geq z(0) + \psi_1(0) e^{-\mu_1 t}, \qquad \text{for all} \ t>0.
$$

\end{proof}

\subsection{Probabilistic arguments}

Here we will argue relating the value for our game and the value for the game \textit{a random walk for $\lambda_j$} introduced in \cite{BlancRossi}.
We call $z^\eps (x_0)$ the value of the game for the elliptic case (see \cite{BlancRossi}) considering the initial position $x_0$ and a length step of $\eps$.
This game is the same as the one described in Section \ref{sect-games}
 but now we do not take into account the time, that is, we do not stop when $t_k <0$ (and therefore we do not have that
the number of plays is a priori bounded by $\left\lceil 2T/\eps^2\right\rceil$).
We will call $x_\tau \not\in \Omega$ the final position of the token.
In what follows we will refer to the game described in Section \ref{sect-games} as the parabolic game while when we disregard time we
refer to the elliptic game. Notice that the elliptic DPP is given by 
\begin{equation*}
\left\{
\begin{array}{ll}
\displaystyle v^\eps (x) = \inf_{
{dim}(S)=j
} \sup_{
v\in S, |v|=1}
\left\{ \frac{1}{2} v^\eps \Big(x + \eps v\Big) + \frac{1}{2} v^\eps \Big(x - \eps v\Big)
\right\}  & x \in \Omega, \\
v^\eps (x) = g(x)  & x \not\in \Omega.
\end{array}\right.
\end{equation*}
Solutions to this DPP are stationary solutions (solutions independent of time) for the DPP
that correspond to the parabolic game. Let us recall it here,
\begin{equation*}
\left\{
\begin{array}{ll}
\displaystyle u^\eps (x,t) =
\inf_{{dim}(S)=j} \sup_{v\in S, |v|=1}
\left\{ \frac{1}{2} u^\eps \Big(x + \eps v,t-\frac{\eps^2}{2}\Big) + \frac{1}{2} u^\eps \Big(x - \eps v,t-\frac{\eps^2}{2}\Big)
\right\}  & (x,t) \in \Omega_T, \\[10pt]
u^\eps (x,t) = h(x,t)  & (x,t) \not\in \Omega_T.
\end{array}
\right.
\end{equation*}
Here we choose $h$ in such a way that it does not depend on $t$ (we can do this since we are assuming that $g$ does not depend on $t$). 

Our goal will be to show that there exist two positive constants $\mu$, depending only on $\Omega$, and $C$, depending on $u_0$, but both independent of 
$\eps$, such that
$$
\| u^\eps (\cdot,t) - v^\eps (\cdot)\|_\infty \leq C e^{-\mu t}.
$$

For the elliptic game, the strategies are denoted by $\tilde S_\I$ and $\tilde S_\II$. 
Given two strategies for the elliptic game, we can play the parabolic game according to those strategies by considering,
for all $t_0>0$,
\begin{equation}\label{equiv strategies}
\begin{array}{l}
S_\I{\left(t_0,x_0,x_1,\ldots,x_k\right)}=\tilde S_\I{\left(x_0,x_1,\ldots,x_k\right)} \\
 \noalign{\vskip 1mm}
S_\II{\left(t_0,x_0,x_1,\ldots,x_k,S\right)}=\tilde S_\II{\left(x_0,x_1,\ldots,x_k,S\right)}.
\end{array}
\end{equation}

When we attempt to do the analogous construction, building a strategy for the elliptic game given one for the parabolic game,
we require that the game sequences are not too long since the strategies for the parabolic game are only defined for $t_k>0$
(when $t_k\leq 0$ the parabolic game ends).
However, for any $t>0$, if we suppose that the game ends in less than $\left\lceil 2t/\eps^2\right\rceil$ steps,
i.e. $\tau < \left\lceil 2t/\eps^2\right\rceil$, then we have a bijection between strategies for the two games that have the same 
probability distribution for the game histories $(x_0, x_1,\dots, x_\tau)$.

The next lemma ensures that, in the parabolic game, the probability of the final payoff being given by the initial data goes to $0$ exponentially fast when $t \to +\infty$. In addition, we also prove that in the elliptic game, trajectories that take too long to exit the domain
have exponentially small probability.

\begin{lemma}\label{lemma games are not too long}
Let $\Omega$ be a bounded domain, $S_I,S_\II$ two strategies for the parabolic game
and $\tilde S_I,\tilde S_{II}$ two strategies for the elliptic game. We have, for any $t>0$,
$$
\mathbb{P}_{S_I,S_\II}^{x_0,t} [t_\tau \leq 0] \leq C e^{-\mu t}
\quad\text{ and }\quad
\mathbb{P}^{x_0}_{\tilde S_I,\tilde S_{II}}\left[\frac{\eps^2\tau}{2}\geq t\right]\leq C e^{-\mu t}
$$ 
where $\mu >0$ is a constant depending only on $\Omega$ and $C$ is another constant independent on the size of the steps, $\eps$. 
We recall that $\tau$ denotes the number of steps until the game ends.
\end{lemma}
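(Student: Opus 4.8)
The plan is to obtain both inequalities from a single geometric fact: the number of steps the game needs before the token exits $\Omega$ in a fixed direction has an exponential tail that is uniform in $\eps$. First I would fix a unit vector $w$ and a slab width so that $\Omega$ is contained in a strip $\{x:\, a<\langle w,x\rangle<b\}$ of some finite width $L=b-a$ (this is possible since $\Omega$ is bounded). The key observation, already used in the existence proof, is that regardless of the players' choices the projection $\langle x_k,w\rangle$ changes by exactly $\pm\eps$ at every step with equal probability, so $X_k:=\langle x_k-x_0,w\rangle$ is a symmetric $\pm\eps$ random walk. Hence, as long as the token is still inside $\Omega$, the walk $X_k$ stays in an interval of length $L$. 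I would then invoke the standard fact that a symmetric $\pm\eps$ random walk confined to an interval of length $L$ exits that interval within $N_0:=\lceil (L/\eps)^2\rceil$ steps with probability at least some $p_0>0$ that is \emph{independent of $\eps$} (one can even take $p_0=1/2$ by a second-moment/reflection argument: after $N_0$ steps the walk has typically moved $\sim L$, so it must have left $\Omega$).

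Next I would run this estimate in blocks: condition on the state after $m N_0$ steps; if the game has not ended, then during the next $N_0$ steps the projection walk has probability at least $p_0$ of forcing an exit, independently of the past (the conditional distribution of the next $N_0$ increments is again a symmetric $\pm\eps$ walk, and the confinement argument applies verbatim as long as $x_{mN_0}\in\Omega$). Therefore
\[
\mathbb{P}\big[\tau > m N_0\big]\le (1-p_0)^m.
\]
This gives the exponential tail for $\tau$ with rate independent of $\eps$: writing $k=\tau$, we get $\mathbb{P}[\tau>k]\le C\rho^{k/N_0}$ for $\rho=1-p_0<1$, i.e. $\mathbb{P}[\tau>k]\le C e^{-\nu \eps^2 k}$ for a constant $\nu>0$ depending only on $\Omega$ (here I use $N_0\sim (L/\eps)^2$).

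From this single tail bound both displayed inequalities follow. For the parabolic game, the event $\{t_\tau\le 0\}$ means the game lasted at least $\lceil 2t_0/\eps^2\rceil$ steps, since each step decreases time by $\eps^2/2$; hence
\[
\mathbb{P}^{x_0,t}_{S_I,S_{II}}[t_\tau\le 0]\le \mathbb{P}^{x_0,t}_{S_I,S_{II}}\!\left[\tau\ge \frac{2t}{\eps^2}\right]\le C e^{-\nu\eps^2\cdot (2t/\eps^2)}=C e^{-\mu t},
\]
with $\mu=2\nu$ independent of $\eps$. For the elliptic game, the event $\{\eps^2\tau/2\ge t\}$ is exactly $\{\tau\ge 2t/\eps^2\}$, so the same computation yields $\mathbb{P}^{x_0}_{\tilde S_I,\tilde S_{II}}[\eps^2\tau/2\ge t]\le C e^{-\mu t}$. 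Note the two games share the same transition law while the token is inside $\Omega$, so the bound on $\tau$ is literally the same in both cases.

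The main obstacle is making the exit probability $p_0$ genuinely uniform in $\eps$ while the confinement is only "on the event that the token is still in $\Omega$": one must phrase the blockwise argument so that, \emph{conditionally on $x_{mN_0}\in\Omega$}, the next block of $N_0$ increments is an unconditioned symmetric walk and the projection at the start of the block lies inside a length-$L$ interval, so the confined-exit estimate applies and forces $x_{mN_0+i}\notin\Omega$ for some $i\le N_0$ with probability $\ge p_0$. The clean way is: if after $mN_0$ steps the token is still in $\Omega$, then on a set of probability $\ge p_0$ the projection walk exits the slab containing $\Omega$ during steps $mN_0+1,\dots,(m+1)N_0$, which forces $\tau\le (m+1)N_0$. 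Everything else — the second-moment estimate for the confined walk and the geometric-series summation — is routine.
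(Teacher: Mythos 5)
There is a genuine gap at the very first step: your claim that ``regardless of the players' choices the projection $\langle x_k,w\rangle$ changes by exactly $\pm\eps$ at every step with equal probability'' is false. The token moves to $x_k\pm\eps v$ where $v$ is a unit vector chosen by the players, so the projection increment is $\pm\eps\langle v,w\rangle$, which lies anywhere in $[-\eps,\eps]$ and in particular can be made identically zero: for $1\leq j\leq N-1$ Player~I may choose the subspace $S$ orthogonal to $w$ at every turn, forcing $v\perp w$, and then $\langle x_k,w\rangle$ never moves. Consequently the projected process is a martingale but not a nondegenerate $\pm\eps$ walk, the ``confined walk exits a length-$L$ interval within $N_0=\lceil (L/\eps)^2\rceil$ steps with probability $p_0>0$'' estimate does not apply, and no uniform $p_0$ exists for a single fixed direction $w$. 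Everything downstream (the blockwise iteration and both displayed inequalities) collapses with it.

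The repair is exactly what the paper does: instead of a one-dimensional projection, use the squared distance. Since $|v|=1$ for every admissible move, one has
\begin{equation*}
\frac{|x_k+\eps v-x|^2+|x_k-\eps v-x|^2}{2}=|x_k-x|^2+\eps^2,
\end{equation*}
so $M_k=|x_k-x|^2-\eps^2 k$ is a martingale with deterministic expected growth $\eps^2$ per step \emph{regardless of the players' choices}. Taking $B_R(x)\supset\Omega$, optional stopping gives $\eps^2\,\mathbb{E}[\tau]\leq R^2$, Markov's inequality gives a one-block bound $\mathbb{P}[\eps^2\tau/2\geq t]\leq R^2/(2t)$, and the same conditioning-on-blocks iteration you propose then upgrades this to the exponential tail $Ce^{-\mu t}$ with $\mu$ depending only on $R$, hence only on $\Omega$. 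Your transfer of the tail bound from the elliptic to the parabolic game via the identification of $\{t_\tau\leq 0\}$ with $\{\tau\geq 2t/\eps^2\}$ and the equivalence of strategies on trajectories of bounded length is fine and matches the paper; only the source of the tail estimate needs to be replaced.
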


\begin{proof} 
Take $B_R(x)$ such that $\Omega\subset B_R(x)$.
We start by proving the estimate for the elliptic game. 
Let $\tilde S_I, \tilde S_{II}$ be two strategies for this game.
As computed in \eqref{martingale}, $$M_k=|x_k-x|^2-\eps^2k$$ is a martingale.
By applying the optional stopping theorem, we obtain
\[
\eps^2\mathbb{E}^{x_0}_{\tilde S_I,\tilde S_{II}}[\tau]
=\mathbb{E}^{x_0}_{\tilde S_I,\tilde S_{II}}[|x_\tau-x|^2]
\leq R^2.
\]
Hence, we get
\[
\mathbb{E}^{x_0}_{\tilde S_I,\tilde S_{II}}\left[\frac{\eps^2\tau}{2}\right]
\leq \frac{R^2}{2}
\]
and we can show the bound
\[
\mathbb{P}^{x_0}_{\tilde S_I,\tilde S_{II}}\left[\frac{\eps^2\tau}{2}\geq t\right]
\leq \frac{R^2}{2t}.
\]
For $n\in\N$ by considering the martingale starting after $n$ steps, we can obtain
\[
\mathbb{P}^{x_0}_{\tilde S_I,\tilde S_{II}}\left[\frac{\eps^2\tau}{2}\geq \frac{\eps^2}{2}n+t\Big|\frac{\eps^2\tau}{2}\geq \frac{\eps^2}{2}n\right]
\leq \frac{R^2}{2t}.
\]
Hence, for $n,k\in\N$, applying this bound multiple times we obtain
\[
\begin{split}
\mathbb{P}^{x_0}_{\tilde S_I,\tilde S_{II}}\left[\frac{\eps^2\tau}{2}\geq \frac{\eps^2}{2}nk\right]
=&\ 
\mathbb{P}^{x_0}_{\tilde S_I,\tilde S_{II}}\left[\frac{\eps^2\tau}{2}\geq \frac{\eps^2}{2}nk\Big|\frac{\eps^2\tau}{2}\geq \frac{\eps^2}{2}n(k-1)\right]
\\
&\quad\times
\mathbb{P}^{x_0}_{\tilde S_I,\tilde S_{II}}\left[\frac{\eps^2\tau}{2}\geq \frac{\eps^2}{2}n(k-1)\Big|\frac{\eps^2\tau}{2}\geq \frac{\eps^2}{2}n(k-2)\right]
\\
&\quad\times\dots\times
\mathbb{P}^{x_0}_{\tilde S_I,\tilde S_{II}}\left[\frac{\eps^2\tau}{2}\geq \frac{\eps^2}{2}n\right]
\\
\leq& \left(\frac{R^2}{2(\frac{\eps^2n}{2})}\right)^k.
\end{split}
\]

For $\eps<\eps_0=1$ we consider $$\delta=\frac{R^2}{2e^{-1}}+\frac{1}{2}.$$ We have
\[
\mathbb{P}^{x_0}_{\tilde S_I,\tilde S_{II}}\left[\frac{\eps^2\tau}{2}\geq t\right]
\leq
\mathbb{P}^{x_0}_{\tilde S_I,\tilde S_{II}}\left[\frac{\eps^2\tau}{2}\geq  \frac{\eps^2}{2}\left\lfloor\frac{\delta 2}{\eps^2}\right\rfloor \left\lfloor\frac{t}{\delta}\right\rfloor
\right].
\]
By the above argument we obtain
\[
\mathbb{P}^{x_0}_{\tilde S_I,\tilde S_{II}}\left[\frac{\eps^2\tau}{2}\geq t\right]
\leq
\left(\frac{R^2}{2\left\lfloor\frac{\delta 2}{\eps^2}\right\rfloor\frac{\eps^2}{2}}\right)^{\left\lfloor\frac{t}{\delta}\right\rfloor}
\leq
\left(\frac{R^2}{2(\delta-\frac{\eps_0^2}{2})}\right)^{\frac{t}{\delta}-1}=e^{-\frac{t}{\delta}+1} .
\]
We have shown 
\[
\mathbb{P}^{x_0}_{\tilde S_I,\tilde S_{II}}\left[\frac{\eps^2\tau}{2}\geq t\right]\leq C e^{-\mu t}
\]
for $C=e$ and $\mu=\frac{1}{\delta}$.
The same bound holds for the parabolic game,  using the relation between the strategies given in \eqref{equiv strategies}.
That is,
\begin{eqnarray*}
\mathbb{P}_{S_I,S_\II}^{x_0,t} [t_\tau \leq 0]
&=&
\mathbb{P}^{x_0,t}_{S_I,S_{II}}\left[\frac{\eps^2\tau}{2}\geq t\right] = 1 - \mathbb{P}^{x_0,t}_{S_I,S_{II}} \left[ \tau < \frac{2t}{\eps^2} \right] \\
&=& 1 - \mathbb{P}^{x_0}_{\tilde S_I,\tilde S_{II}} \left[ \tau < \frac{2t}{\eps^2} \right]
\leq  C e^{-\mu t}.
\end{eqnarray*}
The use of the equivalence \eqref{equiv strategies} between strategies of the two games is justified because we are computing
the probability of the number of steps being less than $\left\lceil 2t/\eps^2\right\rceil$.   
\end{proof}

Using Lemma \ref{lemma games are not too long}, we are able to prove
that, as happens for the evolution PDE (see the previous subsection), 
also in the game formulation, the asymptotic behaviour of the value function
as $t$ goes to infinity is given by the value of the elliptic game (that is, by the stationary solution of the game).
Notice that in the probabilistic approach we obtain a bound for 
$\|u(\cdot,t) - z(\cdot)\|_\infty $ of the form $C\|u_0\|_\infty e^{-\mu t}$.
However, we do not have that $\mu$ comes from an eigenvalue problem
but from the exponential bounds obtained in Lemma \ref{lemma games are not too long}.

\begin{proposition}
There exists $\mu >0$, a constant depending only on $\Omega$, and $C>0$ depending on $u_0$, such that
$$
\| u^\eps (\cdot,t) - v^\eps (\cdot)\|_\infty \leq C e^{-\mu t},
$$
where $u^\eps$ and $v^\eps$ are the value functions for the parabolic and the elliptic game, respectively.

Moreover, as a consequence of this exponential decay, we obtain that the solution $u$ of the problem \eqref{convex envelope evolution} and the convex envelope $z(x)$ of $g$ in $\Omega$ satisfy
$$
\|u(\cdot,t) - z(\cdot)\|_\infty \leq C e^{-\mu t}. 
$$ 
\end{proposition}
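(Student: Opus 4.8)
The plan is to compare the parabolic game started at $(x_0,t)$ with the elliptic game started at $x_0$, using the correspondence between strategies given in \eqref{equiv strategies} and the exponential bound of Lemma~\ref{lemma games are not too long}. First I would fix $\eps>0$ and a point $x_0\in\Omega$, and let $S_\I^0$ be a strategy for Player~I in the parabolic game that is $\eta$-optimal for $u^\eps(x_0,t)$ in the sense used in Section~\ref{sect-games}; we transfer it to a strategy $\tilde S_\I^0$ for the elliptic game via \eqref{equiv strategies} (and symmetrically for Player~II). Because $h$ has been chosen independent of $t$, the two games produce, conditionally on $\{\tau<\lceil 2t/\eps^2\rceil\}$, the \emph{same} distribution over finite histories $(x_0,x_1,\dots,x_\tau)$ and the \emph{same} terminal payoff $g(x_\tau)$; they differ only on the event $A=\{\tau\geq\lceil 2t/\eps^2\rceil\}=\{t_\tau\leq 0\}$, on which the parabolic payoff is $u_0(x_{\tau})$ while the elliptic game is still running.

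Next I would split the expectation according to $A$ and its complement. On $A^c$ the two expected payoffs coincide exactly; on $A$ the difference of the integrands is bounded by $\|h\|_\infty+\|g\|_\infty\leq 2\max\{\|u_0\|_\infty,\|g\|_\infty\}$, and $\mathbb{P}(A)\leq Ce^{-\mu t}$ by Lemma~\ref{lemma games are not too long}, with $\mu$ and $C$ independent of $\eps$. Taking $\inf_{S_\I}\sup_{S_\II}$ on both sides (and using that the same inf/sup over the correspondence \eqref{equiv strategies} computes $v^\eps(x_0)$, since on $A^c$ the games are identical), one gets
\[
\bigl| u^\eps(x_0,t) - v^\eps(x_0)\bigr| \;\leq\; 2\max\{\|u_0\|_\infty,\|g\|_\infty\}\, C e^{-\mu t} + \eta,
\]
and letting $\eta\to 0$ and taking the supremum over $x_0\in\Omega$ gives $\|u^\eps(\cdot,t)-v^\eps(\cdot)\|_\infty\leq Ce^{-\mu t}$ with $C$ depending only on $\|u_0\|_\infty$ and $\Omega$. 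The final assertion then follows by passing to the limit $\eps\to 0$: by Theorem~\ref{teo.convergencia.juego} we have $u^\eps\to u$ uniformly on $\overline\Omega\times[0,T]$, and by the corresponding result for the elliptic game in \cite{BlancRossi} we have $v^\eps\to z$ uniformly on $\overline\Omega$; since the bound $Ce^{-\mu t}$ is uniform in $\eps$, it survives the limit and yields $\|u(\cdot,t)-z(\cdot)\|_\infty\leq Ce^{-\mu t}$.

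The main obstacle is the bookkeeping around the correspondence \eqref{equiv strategies}: one must be careful that taking $\inf_{S_\I}\sup_{S_\II}$ in the parabolic game really does reduce, on the event $A^c$, to the elliptic value $v^\eps$, because the parabolic strategies are only defined for $t_k>0$ while elliptic strategies have no such restriction. The clean way around this, already indicated before the lemma, is to note that conditioned on $\{\tau<\lceil 2t/\eps^2\rceil\}$ there is a measure-preserving bijection between the two strategy spaces, so the inf/sup truncated to that event agree; the only genuine loss is the small-probability event $A$, which is controlled by Lemma~\ref{lemma games are not too long}. A secondary (routine) point is checking that the optional-stopping / $\eta$-optimal strategy machinery of Section~\ref{sect-games} applies verbatim here, which it does since the number of steps is still a.s.\ finite.
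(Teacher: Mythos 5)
Your proposal is correct and follows essentially the same route as the paper: both decompose the expected payoff according to the event $\{t_\tau\leq 0\}$ (equivalently $\{\tau\geq 2t/\eps^2\}$), use the strategy correspondence \eqref{equiv strategies} to identify the conditional values of the two games on the complement, bound the bad event by Lemma~\ref{lemma games are not too long}, and pass to the limit $\eps\to 0$ using the uniform convergence $u^\eps\to u$ and $v^\eps\to z$. The only difference is presentational (you couple $\eta$-optimal strategies directly, while the paper writes four inequalities bounding $u^\eps$ and $v^\eps$ above and below by the common conditional inf-sup), which does not change the substance.
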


\begin{proof}
Recall the payoff function $h$ defined in \eqref{h def}, here do not depend on $t$.
For any $(x_0,t_0)\in \Omega\times (0,+\infty)$ fixed, we have
\begin{equation}\label{u eps 1}
\begin{array}{rcl}
u^\eps (x_0,t_0) &=& \displaystyle\inf_{S_I} \sup_{S_\II}  \mathbb{E}_{S_I,S_\II}^{x_0,t_0} \big[ h(x_\tau,t_\tau) \big]  \\
 \noalign{\vskip 1mm}
 &=& \displaystyle
 \inf_{S_I} \sup_{S_\II} \left\{ \mathbb{E}_{S_I,S_\II}^{x_0,t_0} \big[ g(x_\tau)| t_\tau > 0 \big]
 \mathbb{P}_{S_I,S_\II}^{x_0,t_0} (t_\tau > 0) \right.\\
 \noalign{\vskip 1mm}
\displaystyle
 & & + \left. \mathbb{E}_{S_I,S_\II}^{x_0,t_0} \big[ u_0(x_\tau)| t_\tau \leq 0 \big] \mathbb{P}_{S_I,S_\II}^{x_0,t_0} (t_\tau \leq 0) \right\rbrace \\
 \noalign{\vskip 1mm}
 &\leq& \displaystyle\inf_{S_I} \sup_{S_\II} \mathbb{E}_{S_I,S_\II}^{x_0,t_0} \big[ g(x_\tau)| t_\tau > 0 \big]  +(\|g\|_\infty+\|u_0\|_\infty) \sup_{S_\I,S_\II} \mathbb{P}_{S_I,S_\II}^{x_0,t_0} (t_\tau \leq 0)
\end{array}
\end{equation}
and
\begin{equation}\label{u eps 2}
\begin{array}{rcl}
u^\eps (x_0,t_0)  
&\geq& \displaystyle\inf_{S_I} \sup_{S_\II} \mathbb{E}_{S_I,S_\II}^{x_0,t_0} \big[ g(x_\tau)| t_\tau > 0 \big]
-(\|g\|_\infty+\|u_0\|_\infty) \sup_{S_\I,S_\II} \mathbb{P}_{S_I,S_\II}^{x_0,t_0} (t_\tau \leq 0).
\end{array}
\end{equation}

Now, let $z^\eps(x_0)$ be the value of the elliptic game considering as payoff function  the same function $g$ as before. We have
\begin{equation}\label{z eps 1}
\begin{array}{rcl}
z^\eps (x_0) &=& \displaystyle
\inf_{\tilde S_I} \sup_{\tilde S_\II} \left\{ \mathbb{E}_{\tilde{S}_I,\tilde S_\II}^{x_0} \big[g(x_\tau)| \tau < 2t_0/\eps^2\big]
\mathbb{P}_{\tilde{S}_I,\tilde S_\II}^{x_0} (\tau < 2t_0/\eps^2) \right. \\
 \noalign{\vskip 1mm}
& & + \left. \mathbb{E}_{\tilde{S}_I,\tilde S_\II}^{x_0} \big[g(x_\tau)| \tau \geq 2 t_0/\eps^2\big] \mathbb{P}_{\tilde{S}_I,\tilde S_\II}^{x_0} (\tau \geq 2 t_0/\eps^2) \right\rbrace \\
 \noalign{\vskip 1mm}
&\leq & \displaystyle\inf_{\tilde S_I} \sup_{\tilde S_\II}  \mathbb{E}_{\tilde{S}_I,\tilde S_\II}^{x_0} \big[g(x_\tau)| \tau < 2t_0/\eps^2\big]
+ \|g\|_\infty \displaystyle\sup_{\tilde{S}_I,\tilde S_\II} \mathbb{P}_{\tilde{S}_I,\tilde S_\II}^{x_0} (\tau \geq t_0/\eps^2).
\end{array} 
\end{equation}
and
\begin{equation}\label{z eps 2}
\begin{array}{rcl}
z^\eps (x_0) 
&\geq & \displaystyle\inf_{\tilde S_I} \sup_{\tilde S_\II}  \mathbb{E}_{\tilde{S}_I,\tilde S_\II}^{x_0} \big[g(x_\tau)| \tau < 2t_0/\eps^2\big]
- \|g\|_\infty \displaystyle\sup_{\tilde{S}_I,\tilde S_\II} \mathbb{P}_{\tilde{S}_I,\tilde S_\II}^{x_0} (\tau \geq t_0/\eps^2).
\end{array} 
\end{equation}

Given $t_0>0$ in the parabolic game, if we suppose that $\tau< 2t_0/\eps^2$ in both games, we have an equivalence between the strategies of both games, regardless what happens after step $\left\lfloor 2t_0/\eps^2\right\rfloor$.
That is,
\[
\inf_{\tilde S_I} \sup_{\tilde S_\II}  \mathbb{E}_{\tilde{S}_I,\tilde S_\II}^{x_0} \big[g(x_\tau)| \tau < 2t_0/\eps^2\big]
=
\inf_{S_I} \sup_{S_\II} \mathbb{E}_{S_I,S_\II}^{x_0,t_0} \big[ g(x_\tau)| t_\tau > 0 \big].
\]

Now, combining \eqref{u eps 1}, \eqref{u eps 2}, \eqref{z eps 1} and \eqref{z eps 2}, we obtain
\begin{equation}\label{u-z estimate 2}
\left| u^\eps (x_0,t_0) - z^\eps (x_0)\right| \leq 
2\|u_0\|_\infty \displaystyle
\left(
\sup_{\tilde{S}_I,\tilde S_\II} \mathbb{P}_{\tilde{S}_I,\tilde S_\II}^{x_0} (\tau \geq 2t_0/\eps^2)
+
\sup_{S_\I,S_\II}  \mathbb{P}_{S_I,S_\II}^{x_0,t_0} (t_\tau \leq 0)
\right).
\end{equation}

Applying Lemma \ref{lemma games are not too long}, for $\eps<\eps_0=1$, we have
$$
|u^\eps(x_0,t_0) - z^\eps(x_0)| \leq 4\|u_0\|_\infty C e^{-\mu t_0},
$$
for some $\mu$ depending only on $\Omega$.
Letting $\eps\to 0$ and using the uniform convergence of $u^\eps(x_0,t_0)$ and $z^\eps(x_0)$ to $u(x_0,t_0)$ and $z(x_0)$,
respectively, we obtain
$$
|u(x_0,t_0) - z(x_0)| \leq 4\|u_0\|_\infty C e^{-\mu t_0}.
$$
This completes the proof.
\end{proof}

Now, assume that there is an affine function $\pi$ such that $g=\pi$ for $x\not\in \Omega$.
In this case, we have that $\pi(x_k)$ is a martingale.
Hence, under a strategy that forces the game to end outside $\Omega$, we obtain that $\mathbb{E}_{S_I,S_\II}^{x_0,t_0}[h(x_\tau,t_\tau)]=\pi(x_0)$.

Suppose $1\leq j \leq N-1$, $\Omega\subset B_R(x)$ and $g\equiv \pi$.
Player $I$ can choose $S$ at every step in such a way that it is normal to $x-x_k$, hence $v\in S$ is normal to $x-x_k$, we have
\[
|x-x_{k+1}|^2= |x-x_k-v\eps|^2=|v\eps|^2+|x-x_k|^2=\eps^2+|x-x_k|^2.
\]
If Player $I$ plays with this strategy, we obtain $|x-x_k|^2=k\eps^2+|x-x_0|^2$.
Since $\Omega\subset B_R(x)$, $|x-x_k|^2\leq R^2$ for every $x_k\in\Omega$, and hence the game ends after at most
\[
\frac{R^2-|x-x_0|^2}{\eps^2}
\]
turns.
Hence, it holds that
$$
u(x,t) \leq \pi(x)
$$
for every $x\in \Omega$ and every $t>T=2R^2$.

Analogously, if $2\leq j \leq N$, Player II can choose $v\in S$ such that $v$ is normal to $x-x_k$ (because the intersection of $S$ and the $N-1$ dimensional normal space to $x-x_k$ is not empty).
By the same arguments used before, we can show that 
$$
u(x,t) \geq \pi(x)
$$
for every $x\in \Omega$ and every $t>T=2R^2$.

Hence, we have shown that, for $2\leq j \leq N-1$
$$u(x,t)=\pi(x)$$
for every $x\in \Omega$ and every $t>T=2R^2$.
Note that this argument can be considered as a proof of Theorem \ref{teo.comp.cur.intro} based on the game strategies.

We can obtain a similar result when $g=\pi$ in a half-space.
Suppose that $h=\pi$ for every $x\in \{x\in\Omega^c: x\cdot w>\theta\}$ for a given $w\in\R^N$ of norm 1 and $\theta\in\R$. 
Given $y\in \{x\in\Omega: x\cdot w>\theta\}$ we can choose $\xi\in\R^N$ and $r>0$ such that $\{x\in\Omega: x\cdot w\leq \theta\}\subset B_r(\xi)$ and $y\not\in B_r(\xi)$ as depicted in Figure~\ref{bigball}.

\begin{figure}
\begin{tikzpicture}
    \draw [rotate around={45:(0,0)},red] (0,0) ellipse (2cm and 1cm);
    \draw (-0.5,-2) -- (-0.5,2.5);
    \draw [->] (-0.5,2) -- (0.5,2);
    \node at (0,2.2){$w$};
    \node at (2,0){$h=\pi$};
    \node at (0.2,0){$y$};
    \node at (0,0){$\cdot$};
    \draw [blue] (-1,-2.35) arc (-45:45:3cm);
\end{tikzpicture}
\caption{Here $\partial\Omega$ is in red and $\partial B_r(\xi)$ in blue.}
\label{bigball}
\end{figure}
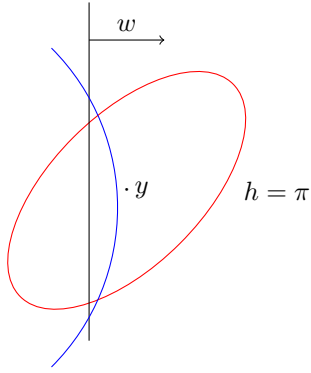

Now, arguing in the same way as before, we can consider the strategies that give a vector $v$ normal to $x_k-\xi$.
Hence, in the case $1\leq j \leq N-1$ we can prove that
$$
u(x,t) \leq \pi(x)
$$
for every $y\in \{x\in\Omega: x\cdot w>\theta\}$ and every $t$ large enough (for instance we can take $t>2r^2$ where $r$ is the radius of the ball described before, that depends on $x$).
Note that the closer is $y$ to the hyperplane $x\cdot w = \theta$, the longer we will have to wait for having the above inequality. 

In the case $2\leq j \leq N$, with analogous arguments, we can also show that we have the reverse inequality, that is,
$$
u(x,t) \geq \pi(x)
$$
for every $x\in \{x\in\Omega: x\cdot w>\theta\}$ and every $t$ large enough.

Next, we present an example that illustrates the result of Theorem \ref{teo.comportamiento.no.cur}.
Although it is possible to give a more general argument, giving rise to an alternative proof of this theorem
based only on probabilistic arguments,
we restrict ourselves to this example to clarify the exposition.

\begin{ejem} \label{example.final}
Consider the parabolic game for $\lambda_j$ in a ball $B_R$ centered at the origin, and take as initial and boundary data two functions $u_0 (x',x'')$ and $g(x',x'')$,
with $(x',x'')\in \mathbb{R}^{j}\times\mathbb{R}^{N-j}$, such that
$$ u_0 > 0, \quad \text{in} \ \Omega, \quad \text{and} \quad g(x',x'') = |x''|, \quad \text{for all} \ (x',x'')\in \R^N\setminus \Omega.$$   
For this choice of $g$, we claim that the solution of problem \eqref{convex envelope equation} satisfies
$$
z(x',x'') = 0, \qquad \text{in} \quad \Omega\cap \{x''=0\}.
$$
In order to prove this claim, we use the geometric interpretation of solutions to problem \eqref{convex envelope equation} given in \cite{BlancRossi}. 
Consider the $j-$dimensional subspace $\{x''=0\}$, and the $j-$dimensional domain $D:=\Omega\cap \{x''=0\}$. 
Following the ideas of \cite{BlancRossi}, 
the solution $z$ of \eqref{convex envelope equation} must satisfy
$$
z\leq z_D, \qquad \text{in} \quad D,
$$
where $z_D$ is the concave envelope of $g$ in $D=\Omega\cap \{x''=0\}$.
By the choice of $g$, it follows that $z_D\equiv 0$.
The claim then follows from the maximum principle, since $g\geq 0$ in $\partial \Omega$.

Now, let us prove that for any $x_0\in \Omega\cap \{x'' = 0\}$ and $t_0>0$, we have
$$
u^\eps (x_0,t_0) = \displaystyle\inf_{S_I} \sup_{S_\II}  \mathbb{E}_{S_I,S_\II}^{x_0,t_0} \big[ h(x_\tau,t_\tau) \big] > 0.
$$

Let $x_0\in \Omega\cap \{x'' = 0\}$. 
Since $u_0\geq 0$, if $u^\eps (x_0,t_0)= 0$, Player I should have a strategy such that whatever Player II does, the final payoff is $0$ with probability 1.
Since $u_0$ vanishes only on $\partial \Omega\cap \{x'' = 0\}$, Player I needs to make sure that $x_k$ reaches this set before the game comes to end.

We claim that the only strategy Player I can follow is to choose the $j-$dimensional subspace $\{x'' = 0\}$ at every step.
Indeed, if at some step, $x_k$ leaves this subspace, the probability of never coming back, and then the final payoff being non-zero, is positive.

Once Player I has fixed this only possible strategy to obtain zero as final payoff, Player II can choose any unitary vector
in the subspace $\{x'' = 0\}$, and plays always with the same vector. Playing with these strategies, the game is reduced to a random walk in a segment,
and it is well known that for this process, the probability of not reaching the extremes of the segment in less than
$\left\lceil 2t_0/\eps^2\right\rceil$ steps is strictly positive for any $t_0>0$ (in fact, it is uniformly bounded below). 
Since the initial condition verifies $u_0>0$ in $\Omega$, we conclude that 
the value of the game is also strictly positive at $(x_0,t_0)$, moreover, it is bounded below, $u^\eps (x_0,t_0)>c>0$, for any $x_0\in \Omega\cap \{x'' = 0\}$ and $t_0>0$ independently of $\eps$.
Then, 
$u^\eps (x_0,t_0)$, and hence its limit as $\eps \to 0$, $u(x_0,t_0)$, does not lie below the stationary solution $z$ in finite time. 

Finally, notice that from our previous arguments, we have that for any point $x_0\in \Omega\setminus \{x'' = 0\}$ there is a finite time
$t_0$ (that depends on $x_0$) such that $u^\eps (x_0,t) = z(x_0)$ for every $t\geq t_0$.
\end{ejem}

\medskip

{\bf Acknowledgements.} Partially supported by CONICET grant PIP GI No 11220150100036CO
(Argentina), by  UBACyT grant 20020160100155BA (Argentina) and by MINECO MTM2015-70227-P
(Spain). 
CE partially supported by Sorbonne Universit\'e, Laboratoire Jaques-Louis Lions (LJLL)
Paris, France.



\begin{thebibliography}{BH}

\bibitem{Alex} A. D. Alexandroff, {\it Almost everywhere existence of the second differential of a convex function
and some properties of convex surfaces connected with it}. Leningrad State Univ. Annals Math.
Ser. 6 (1939). 3--35.

\bibitem{AS} S. N. Armstrong and C. K. Smart. \textit{An easy proof of Jensen'€™s theorem on the uniqueness of infinity harmonic functions.}
Calc. Var. Partial Differential Equations 37(3-4) (2010), 381--384.

    \bibitem{BPR} P. Blanc, J. P. Pinasco and J. D. Rossi. \textit{ Maximal operators for the $p-$Laplacian family}. Pacific Journal of Mathematics, 287 2 (2017): 257--295.


\bibitem{Birin} I. Birindelli, G. Galise Giulio and F. Leoni, {\it Lioville theorems for a family of very degenerate elliptic non lineal operators.} Preprint.

\bibitem{Birin2} I. Birindelli, G. Galise and I. Ishii, {\it A family of degenerate elliptic operators: maximum principle and its
consequences}, to appear in Ann. Inst. H. Poincare Anal. Non Lineaire.

\bibitem{BirindelliIshii} I. Birindelli, G. Galise and I. Ishii, {\it Towards a reversed Faber-Krahn inequality for the truncated Laplacian.} Preprint.

\bibitem{BlancRossi} P. Blanc and J. D. Rossi. {\it Games for eigenvalues of the Hessian and concave/convex envelopes}, to appear in J. Math. Pures Appl.


\bibitem{caffa} L. Caffarelli, L. Nirenberg and J. Spruck, {\it The Dirichlet problem for nonlinear second-order elliptic
equations. III. Functions of the eigenvalues of the Hessian}. Acta Math. 155 (1985), no. 3-4,
261--301.

\bibitem{code} L. Codenotti, M. Lewicka and J. J. Manfredi, {\it Discrete approximations to the double-obstacle problem and optimal stopping of tug-of-war games}. Trans. Amer. Math. Soc. 369 (2017), no. 10, 7387--7403.

\bibitem{CIL} M.G. Crandall,  H. Ishii and  P.L. Lions. \textit{ User's guide to viscosity solutions of second order partial differential equations}. Bull. Amer. Math. Soc. 27 (1992), 1--67.\label{CIL}

\bibitem{Delpe} L. M. Del Pezzo and J. D. Rossi, {\it Tug-of-war games and parabolic problems with spatial and time dependence}. Differential Integral Equations 27 (2014), no. 3-4, 269--288.

\bibitem{HL1} F.R. Harvey, H.B. Jr. Lawson, {\it Dirichlet duality and the nonlinear Dirichlet problem}, Comm. Pure Appl.
Math. 62 (2009), 396--443.

\bibitem{HL2} F.R. Harvey, H.B. Jr. Lawson, {\it $p-$convexity, $p-$plurisubharmonicity and the Levi problem}, Indiana Univ.
Math. J. 62 (2013), 149--169.

\bibitem{heino} J. Heino, {\it Uniform measure density condition and game regularity for tug-of-war games}. Bernoulli 24 (2018), no. 1, 408--432. 

 \bibitem{JPR} P. Juutinen, M. Parviainen and J.D. Rossi. \textit{Discontinuous gradient constraints and the infinity Laplacian}. Int. Math. Res. Not. 
 2016(8) , (2016), 2451--2492.


\bibitem{LM} P. Lindqvist and J. J. Manfredi. {\it On the mean value property for the $p-$Laplace equation in the plane}. Proc. Amer. Math. Soc. 144 (2016), no. 1, 143--149.

\bibitem{luiro} H. Luiro, M. Parviainen and E. Saksman, {\it Harnack's inequality for p-harmonic functions via stochastic games}. Comm. Partial Differential Equations 38 (2013), no. 11, 1985--2003.

\bibitem{MPR} J. J. Manfredi, M. Parviainen and J. D. Rossi. {\it An asymptotic mean value characterization for p-harmonic functions}. Proc. Amer. Math. Soc. 138 (2010), no. 3, 881--889. 

\bibitem{MPRparab} J. J. Manfredi, M. Parviainen and J.~D.~Rossi.
{\it An asymptotic mean value  characterization for  a class of
nonlinear
 parabolic equations related to tug-of-war games.}
SIAM J. Math. Anal. 42(5) (2010), 2058--2081.


 \bibitem{MPRa} J. J. Manfredi, M. Parviainen and J. D. Rossi.
\textit{Dynamic programming principle for tug-of-war games with noise.}
ESAIM, Control, Opt. Calc. Var., 18, (2012), 81--90.

\bibitem{MPRb} J. J. Manfredi, M. Parviainen and J. D. Rossi.
\textit{On the definition and properties of p-harmonious functions.}
Ann. Scuola Nor. Sup. Pisa, 11, (2012), 215--241.

\bibitem{QS}
Q. Liu and A. Schikorra.
\textit{General existence of solutions to dynamic programming principle.}
 Commun. Pure Appl. Anal. 14 (2015), no. 1, 167--184.


\bibitem{OS} A. M. Oberman and L. Silvestre. {\it The Dirichlet problem for the convex envelope}. 
Trans. Amer. Math. Soc. 363 (2011), no. 11, 5871--5886.

\bibitem{Ober} A. M. Oberman. {\it The convex envelope is the solution of a nonlinear obstacle problem}. Proc. Amer. Math. Soc., 135(6) (2007), 1689--1694.

\bibitem{Par} M. Parviainen, E. Ruosteenoja, {\it Local regularity for time-dependent tug-of-war games with varying probabilities}. J. Differential Equations 261 (2016), no. 2, 1357--1398.

\bibitem{Peres} Y. Peres, Yuval and S. Sheffield, {\it Tug-of-war with noise: a game-theoretic view of the p-Laplacian}. Duke Math. J. 145 (2008), 
no. 1, 91--120. 

 
  \bibitem{PSSW} Y. Peres, O. Schramm, S. Sheffield and D. Wilson. \textit{ Tug-of-war and the infinity Laplacian}, J. Amer. Math. Soc., 22, (2009), 167--210.
 
 
\bibitem{Sha} J. P. Sha, {\it Handlebodies and p-convexity}, J. Differential Geometry 25 (1987), 353--361.

\bibitem{Wu} H. Wu, {\it Manifolds of partially positive curvature}, Indiana Univ. Math. J. 36 (1987), 525--548.


\end{thebibliography}
\end{document}